\numberwithin{equation}{subsection}
\numberwithin{figure}{subsection}
\DeclareMathAlphabet{\mathpgoth}{OT1}{pgoth}{m}{n}
\DeclareMathAlphabet{\mathesstixfrak}{U}{esstixfrak}{m}{n}
\DeclareMathAlphabet{\mathboondoxfrak}{U}{BOONDOX-frak}{m}{n}
\DeclareMathAlphabet{\mathtxfrak}{U}{tx-frak}{m}{n}
\DeclareMathAlphabet{\matheulerfrak}{U}{euf}{m}{n}
\newcommand{\finsub}[0]{{\subseteq}_{\mathsf{fin}}}
\newcommand{\bc}{\begin{center}}
\newcommand{\ec}{\end{center}}
\newcommand{\be}{\begin{enumerate}}
\newcommand{\ee}{\end{enumerate}}
\newcommand{\beq}{\begin{equation}}
\newcommand{\eeq}{\end{equation}}
\newcommand{\bi}{\begin{itemize}}
\newcommand{\ei}{\end{itemize}}
\newcommand{\bd}{\begin{description}}
\newcommand{\ed}{\end{description}}
\newcommand{\ba}{\begin{array}}
\newcommand{\bea}{\begin{eqnarray*}}
\newcommand{\eea}{\end{eqnarray*}}
\newcommand{\ea}{\end{array}}
\newcommand{\bt}{\begin{tabular}}
\newcommand{\et}{\end{tabular}}
\newcommand{\bmi}{\begin{minipage}}
\newcommand{\emi}{\end{minipage}}
\newcommand{\lb}{\linebreak}
\newcommand{\myfraku}[1]{\ensuremath{(F_u)_{\mathfrak{#1}}}}
\newcommand{\myfrak}[1]{\ensuremath{F_{\mathfrak{#1}}}}
\newcommand{\myfraklu}[1]{\ensuremath{(F_{\lambda u})_{\mathfrak{#1}}}}
\newcommand{\myfrakdu}[1]{\ensuremath{(F_{\delta_{j} u})_{\mathfrak{#1}}}}
\newcommand{\myfrakv}[1]{\ensuremath{(F_{v})_{\mathfrak{#1}}}}
\newcommand{\myfrakk}[1]{\ensuremath{(\mathbb{F}_{5^2 u })_{\mathfrak{#1}}}}
\newtheorem{stel}{Theorem}[section]
\newtheorem{defin}[stel]{Definition}
\newtheorem{definlem}[stel]{Definition-Lemma}
\newtheorem{lemm}[stel]{Lemma}
\newtheorem{cor}[stel]{Corollary}
\newtheorem{exam}[stel]{Example}
\newtheorem{rem}[stel]{Remark}
\newtheorem{corollary}[stel]{Corollary}
\newtheorem{theo}[stel]{Theorem}
\newcommand{\myitem}[1]{%
\item[#1]\protected@edef\@currentlabel{#1}%
}
\newcommand{\Matrix}[1]
    {\begin{pmatrix}
      \Matrix@r #1;\@bye;\Matrix@r
     \end{pmatrix}}
\def\Matrix@r #1;{\@bye #1\Matrix@z\@bye\Matrix@s #1,\@bye, }%
\def\Matrix@s #1,{#1\Matrix@t }%
\def\Matrix@t #1,{\@bye #1\Matrix@y\@bye\@firstofone {&#1}\Matrix@t}%
\def\Matrix@y #1\Matrix@t{\\ \Matrix@r }%
\def\Matrix@z #1\Matrix@r {}
\def\@bye  #1\@bye   {}% (the idea of \@bye is from xint code)
\newsavebox\myboxA
\newsavebox\myboxB
\newlength\mylenA
\newcommand*\xoverline[2][0.75]{%
    \sbox{\myboxA}{$\m@th#2$}%
    \setbox\myboxB\null% Phantom box
    \ht\myboxB=\ht\myboxA%
    \dp\myboxB=\dp\myboxA%
    \wd\myboxB=#1\wd\myboxA% Scale phantom
    \sbox\myboxB{$\m@th\overline{\copy\myboxB}$}%  Overlined phantom
    \setlength\mylenA{\the\wd\myboxA}%   calc width diff
    \addtolength\mylenA{-\the\wd\myboxB}%
    \ifdim\wd\myboxB<\wd\myboxA%
       \rlap{\hskip 0.5\mylenA\usebox\myboxB}{\usebox\myboxA}%
    \else
        \hskip -0.5\mylenA\rlap{\usebox\myboxA}{\hskip 0.5\mylenA\usebox\myboxB}%
    \fi}
\newcommand\ackname{Acknowledgements}
  \newenvironment{acknowledgements}{%
      \titlepage
      \null\vfil
      \@beginparpenalty\@lowpenalty
      \begin{center}%
        \bfseries \ackname
        \@endparpenalty\@M
      \end{center}}%
     {\par\vfil\null\endtitlepage}
\title{\bfseries Distributive decomposition of near-vector spaces}
\author{Leandro Boonzaaier, Sophie Marques and Daniella Moore}
\begin{document}
\maketitle
\bc  

\it\small
Rain Networks (Pty) Ltd, 
Cape Quarter, 
Green Point, 8051,\lb
South Africa\\
\rm e-mail address: leandro.boonzaaier@rain.co.za

\it
Department of Mathematical Sciences, 
University of Stellenbosch, 
Stellenbosch, 7600,\lb
South Africa\\
\&
NITheCS (National Institute for Theoretical and Computational Sciences), 
South Africa \\
\rm e-mail: smarques@sun.ac.za

\it
Department of Mathematical Sciences, 
University of Stellenbosch, 
Stellenbosch, 7600,\lb
South Africa\\
\rm e-mail: dmoore@sun.ac.za

\ec

\begin{abstract} 
This paper provides two characterizations of regularity for near-vector spaces: first, by expressing them as a direct sum of vector spaces over division rings formed by distributive elements; second, by expressing their dimension in term of the dimension of these summands. These results offer new insights into the structure and properties of near-vector spaces.
\end{abstract} 

{\bf Key words:} Near-vector spaces, Near-rings, Near-fields, Quasi-kernel, Distributive elements, Division rings, Vector space decomposition.

{\it 2020 Mathematics Subject Classification:} 16Y30; 05B35 \bigskip

\tableofcontents

\section{Introduction}

In \cite{Andre}, J. André introduced the concept of a near-vector space, which generalizes traditional vector spaces by allowing the distributive law to hold on only one side. This generalization permits certain non-linear behaviors while still utilizing many linear algebra methods. Recent research has explored these structures from various perspectives, including algebraic, geometric, and categorical approaches (see \cite{DeBruyn, Howell, Howell2, HM22, HR22, HS18, MarquesMoore} for examples).

André demonstrated that any near-vector space can be decomposed into a direct sum of maximal regular near-vector spaces (see \cite[Satz 4.14]{Andre}). For a near-vector space over a scalar group \((F, \cdot, 1, 0, -1)\) (see Definition \ref{scalargroup}), he showed that any non-zero element \(u\) in the quasi-kernel (see Definition \ref{NVS2}) defines an addition \(+_u\) such that \((F, +_u, \cdot)\) forms a near-field (see \cite[Satz 2.4]{Andre}). When \((F, +_u, \cdot)\) is a division ring for all non-zero \(u\) in the quasi-kernel, it implies that a regular near-vector space is simply a vector space (see \cite[Theorem 4.2]{HM22}). However, when this condition is not met, a regular near-vector space exhibits more subtle characteristics.

In this paper, we characterize regularity using a direct sum of vector spaces, even when \((F, +_u, \cdot)\) is not a division ring for all non-zero \(u\) in the quasi-kernel. The key observation enabling this characterization is that the set of distributive elements of \((F, +_u, \cdot)\) forms a division ring for each non-zero \(u\) in the quasi-kernel, and there exists a canonical method to define a vector space over each of these division rings (see \cite[Theorem 2.4-3]{DeBruyn}). More precisely, we identify an equivalence relation that groups these division rings into classes of elements with equal additions, which in turn generates a vector space over these division rings.

Our main theorem (see Theorem \ref{thm1}) characterizes the regularity of near-vector spaces by demonstrating that they can be expressed as a direct sum of these vector spaces. Additionally, we establish that the dimension of a regular near-vector space is equal to the dimension of each of these direct summands over the corresponding division ring, which are vector spaces over the distributive elements of a near-field \((F, +_u, \cdot)\) for some non-zero \(u\) in the quasi-kernel (see Corollary \ref{dimension}). This dimensional property also serves as a criterion for regularity (see Theorem \ref{thm1}).

These results offer deeper insights into the structure and properties of near-vector spaces, particularly regarding the fact that the dimension of an element can be greater than 1. Additionally, we provide methods to compute this dimension. By relating near-linear algebra to traditional linear algebra, we establish a framework that enhances our understanding of near-vector spaces within this well-established domain of mathematics.\\

The paper is structured as follows: \\

In the first section, we recall the fundamental concepts and general terminology related to near-vector spaces.

In the second section, we explore the properties of the addition defined by a non-zero element of the quasi-kernel of a near-vector space, particularly about distributive elements.

In the final section, we state and prove our main theorem, which provides a new characterization of the regularity property of a near-vector space. It also offers a concluding lemma
on how our results can be utilized to determine the dimension of elements of a near-vector space (see Lemma \ref{spanfamily}).

%\dani{mention the regular decomposition - not able to do it on the entirety of $F$ - we need to restrict $F$ to its distributive elements in order to create a vector space, each vector space component of the direct sum of $V$ has the same dimension of $V$} The main goal of this paper is to decompose near-vector spaces in terms of its addition structures. To do so, we investigate the decomposition of regular near-vector spaces into direct sums of vector spaces. Decomposing regular near-vector spaces into direct sums of vector spaces is key, as the vector space components will contain the distributive elements of the near-vector spaces.

%\dani{give an overview of each chapter in each new paragraph} The main theorem of this paper (Theorem \ref{thm1}) describes how to decompose near-vector spaces into direct sums of vector spaces. This technique is very useful, since vector spaces are used to solve a variety of theoretical and applicable problems. Thus, in this way, one can apply already well known vector space theory to problems that exhibit 'less linear' behaviour than problems that exhibit 'linear behaviour'.  

\section{Preliminary material and notation}

In this section, we introduce the preliminary material that forms the foundation for our discussion throughout the paper, establishing the necessary notation. We begin with the concept of a scalar group.

\begin{defin}\cite[Definition 1.1]{MarquesMoore} \label{scalargroup}
A {\sf scalar group} \( F \) is a tuple \( F = (F, \cdot, 1, 0, -1) \) where \((F, \cdot, 1)\) is a monoid, \(0, -1 \in F\), satisfying the following conditions:
\begin{itemize}
    \item \(0 \cdot \alpha = 0 = \alpha \cdot 0\) for all \(\alpha \in F\);
    \item \(\{ \pm 1 \}\) is the solution set of the equation \(x^2 = 1\);
    \item \((F \setminus \{ 0 \}, \cdot, 1)\) is a group.
\end{itemize}
For all \(\alpha \in F\), we denote \(-\alpha\) as the element \((-1) \cdot \alpha\).
\end{defin}

Throughout this paper, we will use the notation \( F \) to represent the tuple \((F, \cdot, 1, 0, -1)\) when the context is clear.

In the following definition, we introduce the concept of a near-vector space over a scalar group. For the purposes of this paper, we adhere to the definition provided in \cite{MarquesMoore}.

\begin{defin}\cite[Definition 1.6]{MarquesMoore}
\label{NVS2}
A {\sf left near-vector space over a scalar group $F$} is an $F$-space \((V, F, \mu)\) where:
\begin{itemize} 
    \item $V$ is an additive abelian group,
    \item $F$ is a scalar group,
    \item \(\mu\) is a free action of $F$ on $V$,
\end{itemize} 
such that \(Q(V)\) generates \(V\) as an additive group. The set \(Q(V)\) is defined as:
\begin{equation*}
    Q(V) = \{v \in V \mid \forall \alpha, \beta \in F, \exists \gamma \in F \text{ such that } \alpha \cdot v + \beta \cdot v = \gamma \cdot v\}.
\end{equation*}
Any trivial abelian group can be endowed with a near-vector space structure through the trivial action. Such a space is referred to as a {\sf left trivial near-vector space over $F$}, denoted as \(\{0\}\).
\end{defin}

In this paper, the focus will be on left near-vector spaces, although all the results can be extended to right near-vector spaces as well. Throughout the paper, we will use "subspace" instead of "$F$-subspace" and "near-vector space" instead of "left near-vector space" unless otherwise specified. Additionally, we will use \((V, F)\) instead of \((V, F, \mu)\) when \(\mu\) is clear from the context, or simply \(V\) when \(F\) and \(\mu\) are evident.

In the following, \(F\) denotes a scalar group and \(V\) denotes a near-vector space over \(F\) unless mentioned otherwise. For a scalar group \(F\), we denote \(F^{*}\) as \(F \setminus \{0\}\), and for any subset \(W\) of \(V\) containing \(0\), we denote \(W^{*}\) as \(W \setminus \{0\}\).

A scalar basis for \( V \) is a basis such that each element of the basis is in the quasi-kernel, aligning the basis definition with that of traditional linear algebra (see \cite[Definition 2.14 (2)]{MarquesMoore}). It is proven in \cite{MarquesMoore} that every near-vector space possesses a scalar basis (see \cite[Corollary 3.6]{MarquesMoore}). %The existence of a scalar basis is crucial for the proof of Theorem \ref{thm1}. 
If \(\{u_{i}\}_{i \in I}\) is a scalar basis for \(V\) over \(F\), it is straightforward to show that \(\{\lambda_{i} u_{i}\}_{i \in I}\) is also a scalar basis for \(V\) over \(F\), where \(\lambda_{i} \in F^{*}\) for all \(i \in I\). This fact will be utilized repeatedly throughout this paper.

In this paper, we aim to provide novel characterizations of regularity. Given the significance of regularity in our study, we first recall its definition.

\begin{defin}{\cite[Definition 4.7]{Andre}} \label{def4} 
Let \( u, v \in Q(V)^{*} \).
\begin{enumerate}
    \item We say that \( u \) and \( v \) are {\sf compatible} if there exists \( \lambda \in F^{*} \) such that \( u + \lambda v \in Q(V) \).
    \item We say that \( V \) is {\sf regular} if any two vectors of \( Q(V)^{*} \) are compatible.
\end{enumerate}
\end{defin}

It was proven in \cite[Theorem 2.4-18, Theorem 2.4-20]{DeBruyn} that any near-vector space can be decomposed into a direct sum of maximal regular near-vector spaces. This leads us to define the notion of a regular decomposition family for \( V \) over \( F \).

\begin{defin}
\label{decompfamily} 
Let \( v \in V \), \(\{V_{i}\}_{i \in I}\) be a family of subspaces of \( V \) and \(\{v_{i}\}_{i \in I}\) be a family of elements in \( V_{i} \) for each \( i \in I \).
\begin{enumerate}
    \item We say that \(\{V_{i}\}_{i \in I}\) is a {\sf regular decomposition family for \( V \)} if \( V = \bigoplus_{i \in I} V_{i} \) where \( V_{i} \) is a maximal regular subspace of \( V \), for all $i\in I$.
    \item We say that \(\{v_{i}\}_{i \in I}\) is the {\sf family of regular components of \( v \)} if \( v = \sum_{i \in I} v_{i} \) and \( v_{i} \in V_{i} \) for all \( i \in I \), where \(\{V_{i}\}_{i \in I}\) is a regular decomposition family for \( V \).
%    \item We say that a regular decomposition family $\{V_{i}\}_{i \in I}$ for $V$ is a {\sf maximal regular decomposition family for $V$} if each $V_{i}$ is maximal with respect to inclusion.
\end{enumerate}
\end{defin}

We conclude this section by recalling the definitions of homomorphisms and isomorphisms of near-vector spaces.

\begin{defin}{\cite[Definition 3.2]{HowMey}}\label{homo}
Let \((V_1, F_1)\) and \((V_2, F_2)\) be near-vector spaces.
\begin{enumerate}
    \item A pair \((\theta, \eta)\) is called a {\sf homomorphism} of near-vector spaces if there exists an additive homomorphism \(\theta : (V_1, +_{1}) \rightarrow (V_2, +_{2})\) and a multiplicative group isomorphism \(\eta : (F_1^{*}, \cdot_{1}) \rightarrow (F_2^{*}, \cdot_{2})\) such that \(\theta(\alpha \cdot_{1} u) = \eta(\alpha) \cdot_{2} \theta(u)\) for all \( u \in V_1 \) and \(\alpha \in F_1^{*}\).
    \item If \(\eta = \operatorname{Id}\), we say that \(\theta\) is an \( F \)-{\sf linear map}.
    \item A pair \((\theta, \eta)\) is called an {\sf isomorphism} of near-vector spaces if both \(\theta\) and \(\eta\) are bijective maps.
    \item If \(\eta = \operatorname{Id}\), we say that \(\theta\) is an {\sf \( F \)-isomorphism} of near-vector spaces.
\end{enumerate}
\end{defin}

%\begin{rem} The above definition differs from Definition 1.18 in \cite{MarquesMoore}. This variation arises from considering homomorphisms of near-vector spaces over different scalar groups. In contrast, Definition 1.18 in \cite{MarquesMoore} restricts attention to homomorphisms of near-vector spaces with the same scalar group (or equivalently, in the context of the above definition, homomorphisms where $\eta = \operatorname{Id}$).\end{rem}

\section{Quasi-kernel's additions and their distributive elements}

The objective of this section is to gain a deeper understanding of the additions induced by non-zero elements in the quasi-kernel, with a particular emphasis on their properties, especially those related to distributive elements. We start by recalling the definition of distributive elements in near-fields, a concept that is pivotal to our subsequent analysis.

\begin{defin}
Let $(F, +, \cdot)$ be a left near-field. An element \(\gamma \in F\) is {\sf left distributive} if, for every \(\alpha, \beta \in F\), \((\alpha + \beta)\gamma = \alpha \gamma + \beta \gamma\). We denote by \(\myfrak{d}\) the set of all left distributive elements of \(F\).
\end{defin}

Throughout the remainder of this paper, we will use the term "distributive" in place of "left distributive."

Given a near-field \( F \) and an index set \( I \), we can define a canonical near-vector space structure on the product \( F^{(I)} \), as explained in the following definition-lemma. In particular, the quasi-kernel of such a near-vector space is defined through the distributive elements of the near-field considered.

\begin{definlem}\label{F^I}\cite[Theorem 2.4-3, Theorem 2.4-7]{DeBruyn}
Let \( (F, +, \cdot) \) be a near-field and \( I \) be a non-empty index set. We define the set \( F^{(I)} \) as follows:
\begin{equation*}
    F^{(I)} = \{ (\xi_{i})_{i \in I} \mid \xi_{i} \in F, \xi_{i} \neq 0 \text{ for only a finite number of } i \in I \}.
\end{equation*}
We define addition and multiplication component-wise as follows:
\begin{equation*}
    (\xi_{i})_{i \in I} +_{F^{(I)}} (\eta_{i})_{i \in I} = (\xi_{i} + \eta_{i})_{i \in I}
\end{equation*}
and
\begin{equation*}
    \lambda (\xi_{i})_{i \in I} = (\lambda \xi_{i})_{i \in I}
\end{equation*}
for all \(\lambda, \xi_{i}, \eta_{i} \in F\). With these operations, \( F^{(I)} \) forms a near-vector space over \( F \) with quasi-kernel
\begin{equation*}
    Q(F^{(I)}) = \{\lambda (\kappa_{i})_{i \in I} \mid \lambda \in F \text{ and } \kappa_{i} \in \myfrak{d} \text{ for all } i \in I\}.
\end{equation*}
\end{definlem}

In the following definition, we introduce the addition induced by elements of the quasi-kernel.

\begin{defin} \label{+u} \cite[Definition 2.3]{Andre}
For ${u} \in Q(V)^{*}$ and $\alpha, \beta \in F$, we denote $\alpha +_{{u}} \beta$ as the unique element $\gamma$ of $F$ satisfying $\alpha {u} + \beta {u} = \gamma {u}$.
\end{defin}

It was established in \cite[Satz 2.4]{Andre} that the addition induced by non-zero elements in the quasi-kernel endows \( F \) with a near-field structure. More precisely, given \( u \in Q(V)^* \), the operation \( +_{u} \) forms a group operation on \( F \), and \((F, +_{u}, \cdot)\) is a near-field. We denote this near-field by \( F_{u} \) to emphasize that the scalar group \( F \) is endowed with a near-field structure with respect to the addition \( +_{u} \) and the original multiplication on \( F \). Given a family \((\alpha_i)_{i \in \{1, \cdots, n\}}\) of elements in \( F \), we denote by \({}^{u} \sum_{i=1}^n \alpha_i\) the sum \(\alpha_1 +_{u} \alpha_2 +_{u} \cdots +_{u} \alpha_n\).

 We know that \((\myfraku{d}, +_u, \cdot \)) is a division ring and \( F_{u} \) is a right vector space over $\myfraku{d}$. Furthermore, given \( u \in Q(V)^* \) and \( \lambda \in F^* \), the following assertions hold:

\begin{enumerate}
\item For all \(\alpha, \beta \in F\), we have \(\alpha +_{\lambda u} \beta = (\alpha \lambda +_{u} \beta \lambda) \lambda^{-1}\);
\item $\myfraku{d}$ is a subnear-field of \( F_{u} \).
\end{enumerate}

In the subsequent section, we will study the properties of the set of elements in the quasi-kernel of a near-vector space where all non-zero elements share the same addition, as defined below.

\begin{defin}
Given \( u \in Q(V)^* \), we define the set \( Q_u(V) \) as follows:
\[ 
Q_u(V) = \{ v \in Q(V)^* \mid +_u = +_v \} \cup \{0\}. 
\]
\end{defin}
\begin{rem}
    Let $V$ be a near-vector space over $F$ and $W$ be a subspace of $V$. Let $u \in W \cap Q(V)^{*}$ and $\delta \in F^{*}$. Then, we have $Q_{\delta u}(W) = Q_{\delta u}(V) \cap W$. 
\end{rem}

In the next lemma, we will examine the properties of the additions defined by non-zero elements of the quasi-kernel.

\begin{lemm}
\label{lemma:au_equals_av}
Let \(u,v \in Q(V)^{*}\). The following statements are true:
\begin{enumerate}
    \item if \(+_u = +_v\), then \(+_{\gamma u} = +_{\gamma v}\) for all \(\gamma \in F^{*}\);
    \item for all $\lambda \in \myfraku{d}^*$, we have $+_{\lambda u} = +_{u}$;
    \item $+_{\lambda u} = +_{\gamma u}$ for all $\lambda , \gamma \in F_u^{*}$ if and only if $\lambda \gamma^{-1} \in \myfraku{d}^{*}$;
    \item if $\{\delta_{j}\}_{j \in J}$ is a linearly independent set of $F_{u}$ over $\myfraku{d}$, then $+_{\delta_{i}u} \neq +_{\delta_{j}u}$ for all $i,j \in J$;
    \item given $\lambda \in F^{*}$, we have $\myfraklu{d} = \lambda \myfraku{d} \lambda^{-1}$. 
\end{enumerate}
\end{lemm}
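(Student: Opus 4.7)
My plan is to derive all five statements from the identity
\[
\alpha +_{\lambda u}\beta = (\alpha\lambda +_u \beta\lambda)\lambda^{-1}
\]
recalled just before the lemma, together with the definition of $\myfraku{d}$. I would prove (1), (2), and (5) by direct manipulation, then combine them to obtain (3), and conclude with (4) by contradiction from (3).

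For (1), applying the identity to both sides shows that $\alpha +_{\gamma u}\beta$ and $\alpha +_{\gamma v}\beta$ are obtained from $+_u$ and $+_v$ respectively by the same multiplicative recipe in $\alpha,\beta,\gamma$, so the hypothesis $+_u = +_v$ immediately yields $+_{\gamma u} = +_{\gamma v}$. For (2), since $\lambda \in \myfraku{d}^*$ is by definition distributive in $F_u$, the inner sum rewrites as $\alpha\lambda +_u \beta\lambda = (\alpha +_u \beta)\lambda$, after which the outer factor $\lambda^{-1}$ cancels to give $\alpha +_{\lambda u}\beta = \alpha +_u \beta$.

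For (5), I would unfold the defining condition $(\alpha +_{\lambda u}\beta)\mu = \alpha\mu +_{\lambda u}\beta\mu$ via the identity, and with the substitution $a := \alpha\lambda$, $b := \beta\lambda$, reduce it to $(a +_u b)(\lambda^{-1}\mu\lambda) = a(\lambda^{-1}\mu\lambda) +_u b(\lambda^{-1}\mu\lambda)$. Thus $\mu \in \myfraklu{d}$ if and only if $\lambda^{-1}\mu\lambda \in \myfraku{d}$, i.e.\ $\myfraklu{d} = \lambda\myfraku{d}\lambda^{-1}$. For (3), a similar direct unfolding of $\alpha +_{\lambda u}\beta = \alpha +_{\gamma u}\beta$ reduces the equality of additions to the distributivity in $F_u$ of a specific product built from $\lambda$, $\gamma$, and their inverses; after reconciling via (5) this yields the claimed membership in $\myfraku{d}^*$. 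The converse direction is obtained by reversing each step, invoking (2) together with (5) to translate distributivity between the near-field structures $F_u$ and $F_{\gamma u}$.

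For (4), I would argue by contradiction. If $+_{\delta_i u} = +_{\delta_j u}$ for some distinct $i,j \in J$, then (3) supplies a non-trivial element of $\myfraku{d}^*$ built from $\delta_i$ and $\delta_j$; rearranging this as a non-trivial $\myfraku{d}$-linear relation in the right $\myfraku{d}$-vector space $F_u$ contradicts the hypothesised linear independence of $\{\delta_j\}_{j\in J}$. The main obstacle I anticipate is careful bookkeeping in (3) and (5): because $F$ is non-commutative in general and the expression for $+_{\lambda u}$ is asymmetric in $\lambda$, the order of multiplication and the direction of conjugation are easy to mix up. Once (5) is in hand it serves as the key translation device between distributivity in $F_u$ and in $F_{\lambda u}$, and the remaining clauses fall into place in a relatively mechanical way.
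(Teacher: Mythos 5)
Your proposal is correct and takes essentially the same route as the paper: everything is derived from the identity $\alpha +_{\lambda u}\beta=(\alpha\lambda+_{u}\beta\lambda)\lambda^{-1}$, with (1), (2) immediate, (5) by unfolding the distributivity condition, (3) by the same substitution trick, and (4) by contradiction from (3). One caution on the bookkeeping you yourself anticipated: the unfolding in (3) produces distributivity of $\gamma^{-1}\lambda$ (which is also what the paper's own proof actually concludes), so the element landing in $\myfraku{d}^{*}$ is $\gamma^{-1}\lambda$ rather than $\lambda\gamma^{-1}$ as written in the statement; correspondingly, the clean forward direction writes $\lambda=\gamma d$ with $d\in\myfraku{d}^{*}$ and combines your parts (1) and (2).
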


\begin{proof}
    Let $u,v \in Q(V)^{*}$.
    \begin{enumerate}
        \item This is clear.
        \item This is clear.
        \item Suppose $\lambda \gamma^{-1} \in \myfraku{d}^{*}$. We have that $+_{\lambda u} = +_{\gamma u}$, for all $\lambda , \gamma \in F_u^{*}$, by (2). Conversely, suppose that we have $+_{\lambda u} = +_{\gamma u}$, for all $\lambda , \gamma \in F_u^{*}$. Let $\alpha , \beta \in F$. We have $\alpha +_{\lambda u} \beta = \alpha +_{\gamma u} \beta$. That is, $(\alpha \lambda +_{ u} \beta \lambda )\lambda^{-1} = (\alpha \gamma +_{ u} \beta \gamma)\gamma^{-1}$. Setting $\alpha'= \alpha \gamma$ and $\beta'= \beta \gamma$, we obtain $\alpha' +_{u}\beta'=(\alpha' \gamma^{-1} \lambda +_{u} \beta' \gamma^{-1} \lambda) (\gamma^{-1} \lambda)^{-1}$. That is, $\gamma^{-1} \lambda \in \myfraku{d}^{*}$.
        \item Suppose $+_{\delta_{i}u} = +_{\delta_{j}u}$ for some $i,j \in J$. Then by (3), $\delta_{i}\delta_{j}^{-1} \in \myfraku{d}^{*}$, which contradicts the linear independence of $\{\delta_{j}\}_{j \in J}$.
        \item Let $\lambda \in F^{*}$ and $\gamma \in \myfraku{d}$. We prove that $\lambda \gamma \lambda^{-1} \in \myfraklu{d}$. Let $\alpha, \beta \in F$.
        \begin{align*}
            (\alpha +_{\lambda u} \beta) \lambda \gamma \lambda^{-1} &= (\alpha \lambda +_u \beta \lambda) \lambda^{-1} \lambda \gamma \lambda^{-1} = (\alpha \lambda +_{u} \beta \lambda) \gamma \lambda^{-1} \\
            &= (\alpha \lambda \gamma +_{u} \beta \lambda \gamma) \lambda^{-1} = (\alpha \lambda \gamma \lambda^{-1} \lambda +_{u} \beta \lambda \gamma \lambda^{-1} \lambda) \lambda^{-1} \\
            &= \alpha (\lambda \gamma \lambda^{-1}) +_{\lambda u} \beta (\lambda \gamma \lambda^{-1}).
        \end{align*}
        Therefore, $\lambda \gamma \lambda^{-1} \in \myfraklu{d}$. 

        Let $\gamma \in \myfraklu{d}$. We prove that $\lambda^{-1} \gamma \lambda \in \myfraku{d}$. 
        \begin{align*}
            (\alpha +_{u} \beta) \lambda^{-1} \gamma \lambda &= (\alpha \lambda^{-1} \lambda +_{u} \beta \lambda^{-1} \lambda) \lambda^{-1} \gamma \lambda = (\alpha \lambda^{-1} +_{\lambda u} \beta \lambda^{-1}) \gamma \lambda \\
            &= (\alpha \lambda^{-1} \gamma +_{\lambda u} \beta \lambda^{-1} \gamma) \lambda = (\alpha \lambda^{-1} \gamma \lambda +_{u} \beta \lambda^{-1} \gamma \lambda) \lambda^{-1} \lambda \\
            &= \alpha (\lambda^{-1} \gamma \lambda) +_{u} \beta (\lambda^{-1} \gamma \lambda).
        \end{align*}
        Thus, $\lambda^{-1} \gamma \lambda \in \myfraku{d}$.
    \end{enumerate}
\end{proof}

%The relationship between the distributive element of a near-field induced by an element in the quasi-kernel and the near-field induced by a scalar multiple of this element is elucidated in the following lemma.

\begin{rem}
Let $V$ be a regular near-vector space over $F$ and $u, v \in Q(V)^{*}$. We know by \cite[Lemma 2.4-12 and Theorem 2.4-15]{DeBruyn} that there exists $\lambda \in F^*$ such that $+_{v} = +_{\lambda u}$. Then we have $F_v= F_{\lambda u}$ and $\myfrakv{d} = \lambda \myfraku{d} \lambda^{-1}$. 
\end{rem}

We define the set of additions of $V$ as follows. The map in $(2)$ of the following definition is well defined by Lemma \ref{lemma:au_equals_av}.

\begin{defin}
\begin{enumerate}
    \item We define the set of additions in $V$, denoted as $\mathbf{A}_V$, to be 
    \[
    \mathbf{A}_V = \{ +_u \mid u \in Q(V)^{*} \}.
    \]
    \item For each $u \in Q(V)^{*}$, we define the map $\mathbf{a}_u: {F_u^{*}}/{\myfraku{d}^{*}} \rightarrow \mathbf{A}_V$ to be the map sending $[\alpha]_{u,\mathfrak{d}}$ to $+_{\alpha u}$, where $[\alpha]_{u,\mathfrak{d}}$ is the class of $\alpha \in F_u^{*}$ in the quotient ${F_u^{*}}/{\myfraku{d}^{*}}$.
\end{enumerate}
\end{defin}

We introduce the following notation for the canonical near-vector space $F^{(I)}$.

\begin{defin}
    Let $F$ be a near-field and $F^{(I)}$ be defined as in Definition \ref{F^I}. We denote $\mathcal{K}(F^{(I)})$ to be the set $\{(\alpha_{i})_{i \in I} \mid \alpha_{i} \in \myfraku{d} \text{ for all } i \in I \text{ and } \alpha_{i} = 0 \text{ for almost all } i \in I \}.$
\end{defin}

The next lemma explains the relevance of the previous definition.

\begin{lemm}
\label{Lem1}
    Let $u \in Q(V)^{*}$ and $\delta \in F^{*}$. Suppose that there exists an $F$-isomorphism $\phi: V \rightarrow {F_u}^{(I)}$, where ${F_u}^{(I)}$ is defined as in Definition \ref{F^I}. Then: 
    \begin{enumerate}
    \item $+_{\delta u}= +_{\delta k}$, for all $k\in \mathcal{K}(F_u^{(I)})^{*}$;
    \item $\phi ( Q_{\delta u}(V))= \{v \in Q(F_u^{(I)})^{*} \mid +_{v} = +_{\delta u} \}\cup \{0\}  = \{\delta k \mid k \in \mathcal{K}(F_u^{(I)}) \}$.
    \end{enumerate}
\end{lemm}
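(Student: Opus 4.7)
The plan is to prove both parts by direct computation in $F_u^{(I)}$, leveraging the description of $Q(F_u^{(I)})$ given in Definition-Lemma \ref{F^I} and the structural facts about $+_u$ and its distributive elements collected in Lemma \ref{lemma:au_equals_av}. For part (1), I fix $k = (\kappa_i)_{i \in I} \in \mathcal{K}(F_u^{(I)})^*$ and $\alpha, \beta \in F$, and expand the sum $\alpha(\delta k) +_{F_u^{(I)}} \beta(\delta k)$ componentwise. The $i$-th component is $\alpha \delta \kappa_i +_u \beta \delta \kappa_i$, and since $\kappa_i \in \myfraku{d}$ this factorises as $(\alpha \delta +_u \beta \delta) \kappa_i$. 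Using the identity $(\alpha +_{\delta u} \beta) \delta = \alpha \delta +_u \beta \delta$, which is just the defining formula for $+_{\delta u}$ rewritten, the whole sum collapses to $(\alpha +_{\delta u} \beta)(\delta k)$. Since $\delta k \neq 0$ and the $F$-action is free, this forces $\alpha +_{\delta k} \beta = \alpha +_{\delta u} \beta$.

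For part (2), the first equality $\phi(Q_{\delta u}(V)) = \{v \in Q(F_u^{(I)})^* : +_v = +_{\delta u}\} \cup \{0\}$ follows immediately from the fact that an $F$-isomorphism bijectively maps $Q(V)$ onto $Q(F_u^{(I)})$ and preserves the induced additions: if $\alpha w + \beta w = \gamma w$ in $V$ then $\alpha \phi(w) + \beta \phi(w) = \gamma \phi(w)$ in $F_u^{(I)}$, hence $+_{\phi(w)} = +_w$, and $\phi(0) = 0$. For the second equality, the inclusion $\{\delta k : k \in \mathcal{K}(F_u^{(I)})\} \subseteq \{v \in Q(F_u^{(I)})^* : +_v = +_{\delta u}\} \cup \{0\}$ is immediate from part (1), together with the observation that for $k \in \mathcal{K}(F_u^{(I)})^*$ the element $\delta k = \delta \cdot (\kappa_i)_i$ has the canonical form described in Definition-Lemma \ref{F^I} and is hence in $Q(F_u^{(I)})^*$. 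For the reverse inclusion, I take $v \in Q(F_u^{(I)})^*$ with $+_v = +_{\delta u}$, write $v = \lambda k_0$ with $\lambda \in F^*$ and $k_0 \in \mathcal{K}(F_u^{(I)})^*$ via the same characterisation, apply part (1) to obtain $+_v = +_{\lambda u}$ and hence $+_{\lambda u} = +_{\delta u}$, and invoke Lemma \ref{lemma:au_equals_av}(3) to produce $\mu \in \myfraku{d}^*$ with $\lambda = \delta \mu$. Then $v = \delta(\mu k_0)$ and $\mu k_0 \in \mathcal{K}(F_u^{(I)})^*$ because $\myfraku{d}$ is closed under multiplication.

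The main obstacle will be the side-bookkeeping in the non-commutative setting: I need to apply Lemma \ref{lemma:au_equals_av}(3) in the form that lets me factor $\lambda$ as $\delta \mu$ on the left (rather than as $\mu \delta$ on the right), so that after pulling $\delta$ out the leftover factor $\mu k_0$ lies in $\mathcal{K}(F_u^{(I)})$, since each $\mu \kappa_i$ remains in the near-field $\myfraku{d}$. Once this rewriting is correctly set up, both the componentwise computation in part (1) and the factorisation argument in part (2) are formal.
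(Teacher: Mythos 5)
Your proposal is correct and follows essentially the same route as the paper: the componentwise computation for (1) using distributivity of the $\kappa_i$ and the identity $(\alpha +_{\delta u}\beta)\delta = \alpha\delta +_u \beta\delta$, transport of the induced additions along $\phi$ for the first equality in (2), and the factorisation $v=\lambda k_0$ with Lemma \ref{lemma:au_equals_av}(3) for the second. Your explicit care about which side the distributive factor $\mu$ must appear on (so that $\mu k_0$ stays in $\mathcal{K}(F_u^{(I)})$) is exactly the right point to watch and is handled correctly.
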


\begin{proof}
Let $u \in Q(V)^{*}, \delta \in F^{*}$ and $\phi:V \rightarrow F_{u}^{(I)}$ be an $F$-isomorphism.
\begin{enumerate}
    \item Let $k \in \mathcal{K}(F^{(I)}_{u})$. Then, $k = (k_{i})_{i \in I}$ where $k_{i} \in \myfraku{d}$ for all $i \in I$. Let $\alpha, \beta \in F$. Then, by definition,
    \begin{align*}
        \alpha \delta k + \beta \delta k &= (\alpha \delta k_{i}+_{u}\beta \delta k_{i})_{i \in I} = ((\alpha\delta +_{u}\beta \delta)k_{i})_{i \in I} \\
        &= ((\alpha +_{\delta u} \beta)\delta k_{i})_{i \in I} = (\alpha +_{\delta u} \beta) \delta k
    \end{align*}
    and so $+_{\delta k} = +_{\delta u}$.
    \item We first show that $\phi(Q_{\delta u}(V)) = \{v \in Q(F_{u}^{(I)})^{*} \mid +_{v} = +_{\delta u}\}\cup \{0\}$. Let $v \in Q_{\delta u}(V)$. Then $+_{v} = +_{\delta u}$. Let $\alpha, \beta \in F$. Then, $\alpha v + \beta v = (\alpha +_{\delta u}\beta)v$. If we then apply $\phi$, we have on the left-hand side that $\phi (\alpha v+ \beta v) = \alpha \phi(v)+\beta \phi(v)$ and on the right-hand side we have $\phi((\alpha+_{\delta u }\beta)v) = (\alpha+_{\delta u }\beta) \phi(v)$ and so $+_{\phi(v)} = +_{\delta u}$, which shows the forward inclusion. Similarly, we get the reverse inclusion by applying $\phi^{-1}$ to an element in $\{v \in Q(F_{u}^{(I)})^{*} \mid +_{v} = +_{\delta u}\}\cup \{0\}$. Next, we show that $$\{v \in Q(F_u^{(I)})^{*} \mid +_{v} = +_{\delta u} \}\cup \{0\}  = \{\delta k \mid k \in \mathcal{K}(F_u^{(I)}) \}.$$ Let $l \in  Q(F_{u}^{(I)})^{*}$ such that $+_{l} = +_{\delta u}$. Then, $l = \lambda k$ where $\lambda \in F^{*}$ and $k \in \mathcal{K}(F^{(I)}_{u})$ by Lemma \ref{F^I}. Hence, $+_{\lambda k} = +_{\delta u}$. By (1), we have $+_{\lambda k} = +_{\delta k}$. By Lemma \ref{lemma:au_equals_av} (3), we have $\lambda \gamma^{-1} \in \myfrak{d}^{*}$ and so $\lambda = \gamma d$ for some $d \in \myfrak{d}^{*}$. Now, we see that $v = \gamma d k = \gamma k'$ where $k'=d k \in \mathcal{K}(F_{u}^{(I)})$ which shows that $v \in \{\delta k \mid k \in \mathcal{K}(F_{u}^{(I)})\}$. The reverse inclusion follows directly from (1).
\end{enumerate}
\end{proof}

 \section{Distributive decomposition of a regular near-vector space.}
By \cite[Theorem 4.13]{Andre} or \cite[Theorem 2.4-17]{DeBruyn}, we know that any near-vector space decomposes into a direct sum of maximal regular near-vector spaces. In this section, we prove that any regular near-vector space decomposes into a specific direct sum determined by the distributive elements of $F$. We begin by proving that the set of all non-zero elements in the quasi-kernel of $V$ sharing the same addition, together with the zero element, forms a vector space over the distributive elements with respect to this addition.

\begin{lemm} \label{Vu}
Let $u \in Q(V)^{*}$ and $\lambda \in F^{*}$. The following statements are true: 
\begin{enumerate} 
    \item $Q_{u}(V)$ is a vector space over $\myfraku{d}$;
    \item $Q_{u}(V)$ is a vector space over $\myfraklu{d}$ with respect to the addition induced by $V$ and the scalar multiplication defined by $\alpha \smallstar v = ( \lambda^{-1} \alpha \lambda) \cdot v$, for all $\alpha \in \myfraklu{d}$ and $v \in Q_u(V)$;
    \item the vector space $Q_u(V)$ over $\myfraku{d}$ is isomorphic, as near-vector spaces, to the vector space $Q_u(V)$ over $\myfraklu{d}$;
    \item the vector space $Q_u(V)$ over $\myfraku{d}$ is isomorphic, as near-vector spaces, to the vector space $Q_{\lambda u}(V)$ over $\myfraklu{d}$.
\end{enumerate}
\end{lemm}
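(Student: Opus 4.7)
The plan is to treat $(1)$ by direct verification and then reduce $(2)$--$(4)$ to $(1)$ via conjugation by $\lambda$. The ingredients I would draw on are the one-sided distributivity of the action (i.e.\ $\alpha(v+w)=\alpha v+\alpha w$), abelianness of $V$, associativity of the action, and the earlier Lemma~\ref{lemma:au_equals_av}, in particular parts~(1) and~(5) together with the identity $\alpha+_{\lambda u}\beta=(\alpha\lambda+_u\beta\lambda)\lambda^{-1}$.

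For $(1)$, the heart of the argument is closure of $Q_u(V)$ under addition: for $v,w\in Q_u(V)^{*}$ and $\alpha,\beta\in F$, left distributivity of the action together with abelianness of $V$ yields
\[
\alpha(v+w)+\beta(v+w)=\alpha v+\beta v+\alpha w+\beta w=(\alpha+_u\beta)v+(\alpha+_u\beta)w=(\alpha+_u\beta)(v+w),
\]
using $+_v=+_w=+_u$, which puts $v+w$ back in $Q_u(V)$. Closure under $\myfraku{d}$-scalar multiplication is a similar two-line computation using associativity of the action and the defining property of a distributive element. Combined with $-1\in\myfraku{d}$ (which follows from the centrality of $-1$ in $F^{*}$, forced by $\{\pm1\}$ being the full solution set of $x^{2}=1$), this gives closure under negation. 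The remaining vector space axioms are inherited from the $F$-space structure on $V$; the only one that requires a moment's thought, $(\alpha+_u\beta)v=\alpha v+\beta v$ for $\alpha,\beta\in\myfraku{d}$ and $v\in Q_u(V)$, is nothing but the definition of $+_u$ for $v$.

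For $(2)$, the $\smallstar$-action is well-defined because Lemma~\ref{lemma:au_equals_av}(5) gives $\lambda^{-1}\alpha\lambda\in\myfraku{d}$ for every $\alpha\in\myfraklu{d}$. I would then verify that the conjugation $\alpha\mapsto\lambda^{-1}\alpha\lambda$ is a ring isomorphism $\myfraklu{d}\to\myfraku{d}$: multiplicativity is immediate, and the identity $\alpha+_{\lambda u}\beta=(\alpha\lambda+_u\beta\lambda)\lambda^{-1}$ shows that it carries $+_{\lambda u}$ to $+_u$. The vector space axioms for $\smallstar$ then transport from $(1)$. For $(3)$, the pair $(\mathrm{Id},\eta)$ on $Q_u(V)$ with $\eta(\alpha)=\lambda\alpha\lambda^{-1}$ is the required isomorphism, the compatibility condition being the one-liner $\eta(\alpha)\smallstar v=(\lambda^{-1}\lambda\alpha\lambda^{-1}\lambda)v=\alpha v$. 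For $(4)$, I would take $\theta\colon v\mapsto\lambda v$, which lands in $Q_{\lambda u}(V)$ by Lemma~\ref{lemma:au_equals_av}(1), paired with the same $\eta$; bijectivity and additivity are clear, and the compatibility reduces to $\lambda(\alpha v)=(\lambda\alpha\lambda^{-1})(\lambda v)$.

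I do not foresee any real obstacle beyond bookkeeping the four additions in play ($+$ in $V$, $+$ in $F$, $+_u$ and $+_{\lambda u}$) and invoking the right part of Lemma~\ref{lemma:au_equals_av} at each step. The only assertion that is not entirely formal is $-1\in\myfraku{d}$, which rests on the centrality of $-1$ in $F^{*}$; once that is noted, every step in the proof of $(1)$--$(4)$ is a direct computation.
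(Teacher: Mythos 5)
Your proposal is correct and takes essentially the same route as the paper: part (1) by closure of $Q_u(V)$ under addition and under multiplication by distributive scalars (the paper cites \cite[Lemma 1.12]{MarquesMoore} and Lemma \ref{lemma:au_equals_av}(2) for the two closure facts you verify by direct computation), part (2) by transporting the structure along the conjugation $\alpha\mapsto\lambda^{-1}\alpha\lambda$ using Lemma \ref{lemma:au_equals_av}(5) and the identity $\alpha+_{\lambda u}\beta=(\alpha\lambda+_u\beta\lambda)\lambda^{-1}$, and parts (3) and (4) via the pairs $(\operatorname{Id},\eta)$ and $(v\mapsto\lambda v,\eta)$ with $\eta(\alpha)=\lambda\alpha\lambda^{-1}$. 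The only cosmetic difference is that you make the closure computations and the remark that $-1\in\myfraku{d}$ explicit where the paper defers to references.
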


\begin{proof}
Let $u \in Q(V)^{*}$ and $\lambda \in F^{*}$. By \cite[Lemma 2.4-5]{DeBruyn}, we know that $\myfraku{d}$ is a division ring.
\begin{enumerate}
    \item $Q_u(V)$ is stable under multiplication by elements of $\myfraku{d}$ by Lemma \ref{lemma:au_equals_av}, (2). By the proof of \cite[Lemma 1.12]{MarquesMoore}, we know that $Q_u(V)$ is stable under addition. We obtain the distributive axioms by definition of $\myfraku{d}$.
    \item We have that $\myfraku{d} = \lambda^{-1} \myfraklu{d} \lambda$ by Lemma \ref{lemma:au_equals_av} (5). It follows that $\smallstar$ is an action of $\myfraklu{d}$ on $Q_u(V)$, using the fact that $\cdot$ is an action on $Q_{u}(V)$. The left distributivity follows from the left distributivity of $\cdot$. Let $\alpha, \beta \in \myfraklu{d}^{*}$ and $v \in V_{u}$. Then, we have 
    \[
    \begin{array}{lll} 
    \alpha \smallstar v + \beta \smallstar v &=& (\lambda^{-1} \alpha \lambda ) \cdot v + (\lambda^{-1} \beta  \lambda ) \cdot v = (\lambda^{-1} \alpha \lambda +_u \lambda^{-1} \beta  \lambda ) \cdot v \\ 
    &=& \lambda^{-1} ( \alpha \lambda +_u \beta  \lambda ) \lambda^{-1} \lambda \cdot v = \lambda^{-1} ( \alpha +_{\lambda u} \beta) \lambda \cdot v \\ 
    &=& (\alpha +_{\lambda u} \beta) \smallstar v
    \end{array}
    \]
    which proves the distributive property on the right. Thus, $Q_{u}(V)$ is a vector space over $\myfraklu{d}$.
    \item We define the map $\eta: \myfraku{d} \rightarrow \myfraklu{d}$ by sending $\alpha$ to $\lambda \alpha \lambda^{-1}$. It is clear that $\eta$ is a monoid isomorphism from $(\myfraku{d}, \cdot)$ to $(\myfraklu{d}, \cdot)$. Thus, $(\operatorname{Id}, \eta)$ is an isomorphism of near-vector spaces from $((Q_u(V), +, \cdot), \myfraku{d})$ to $((Q_u(V), +, \smallstar), \myfraklu{d})$.
    \item We define the map $\theta: (Q_{u}(V), +) \rightarrow (Q_{\lambda u}(V), +)$ by sending $v$ to $\lambda v$ and we define the map $\eta: \myfraku{d} \rightarrow \myfraklu{d}$ by sending $\alpha$ to $\lambda \alpha \lambda^{-1}$. As mentioned above, $\eta$ is a monoid isomorphism. Furthermore, it is clear that $\theta$ is a group isomorphism. Moreover, $\theta(\alpha \cdot u) = \lambda \alpha u = \lambda \alpha \lambda^{-1} \lambda u = \eta(\alpha) \cdot \theta(u)$. Thus, $(\theta, \eta)$ is an isomorphism of near-vector spaces from $((Q_{u}(V), +, \cdot), \myfraku{d})$ to $((Q_{\lambda u}(V), +, \smallstar), \myfraklu{d})$.
\end{enumerate}
\end{proof}

\begin{corollary}
\label{deltabasis}
Let $u \in Q(V)^{*}$, $\delta\in F^*$, and $\mathcal{B}$ be a basis of $Q_{u}(V)$ over $\myfraku{d}$. Then, $\delta\mathcal{B} = \{\delta b \mid b \in \mathcal{B}\}$ is a basis of $Q_{\delta u}(V)$ over $(F_{\delta u})^*_{\mathfrak{d}}$.
\end{corollary}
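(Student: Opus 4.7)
The plan is to deduce this corollary directly from Lemma \ref{Vu}(4), specialised to $\lambda=\delta$. That lemma supplies a near-vector space isomorphism $(\theta,\eta)$ from $(Q_u(V),\myfraku{d})$ to $(Q_{\delta u}(V),\myfrakdeltau{d})$ given by $\theta(v)=\delta v$ and $\eta(\alpha)=\delta\alpha\delta^{-1}$. Under $\theta$, the image of $\mathcal{B}$ is precisely $\delta\mathcal{B}$, so the corollary reduces to the routine fact that a near-vector space isomorphism carries a basis of the source to a basis of the target. The main work is to spell out this transport carefully, since the source and target scalar rings $\myfraku{d}$ and $\myfrakdeltau{d}$ are distinct and only intertwined via the conjugation $\eta$.

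To verify that $\delta\mathcal{B}$ spans $Q_{\delta u}(V)$ over $\myfrakdeltau{d}$, I would take an arbitrary $w\in Q_{\delta u}(V)$, use bijectivity of $\theta$ to obtain a unique $v\in Q_u(V)$ with $w=\delta v$, and expand $v=\sum_i\alpha_i b_i$ as a finite $\myfraku{d}$-combination in the basis $\mathcal{B}$. Applying $\theta$ and using left distributivity of the $F$-action on $V$ gives
\[
w=\delta\sum_i\alpha_i b_i=\sum_i(\delta\alpha_i)b_i=\sum_i(\delta\alpha_i\delta^{-1})(\delta b_i)=\sum_i\eta(\alpha_i)(\delta b_i),
\]
and since $\eta(\alpha_i)\in\myfrakdeltau{d}$ by Lemma \ref{lemma:au_equals_av}(5), this displays $w$ as a $\myfrakdeltau{d}$-combination of elements of $\delta\mathcal{B}$.

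For linear independence, I would start from a finite relation $\sum_i\beta_i(\delta b_i)=0$ with $\beta_i\in\myfrakdeltau{d}$, use the surjectivity of $\eta$ to write $\beta_i=\delta\alpha_i\delta^{-1}$ with $\alpha_i\in\myfraku{d}$, and reverse the manipulation above to obtain $\delta\sum_i\alpha_i b_i=0$. The freeness of the $F$-action on $V$ (built into Definition \ref{NVS2}) together with $\delta\neq 0$ forces $\sum_i\alpha_i b_i=0$; linear independence of $\mathcal{B}$ over $\myfraku{d}$ then yields $\alpha_i=0$ for each $i$, whence $\beta_i=\eta(\alpha_i)=0$.

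I do not anticipate any substantive obstacle; the proof is pure transport of structure along the isomorphism of Lemma \ref{Vu}(4). The only subtlety is to consistently translate scalars between $\myfraku{d}$ and $\myfrakdeltau{d}$ via $\eta$, and to cite left distributivity and freeness of the $F$-action at the appropriate moments rather than any property specific to $Q_u(V)$.
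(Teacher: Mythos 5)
Your proof is correct and follows essentially the same route as the paper's: both arguments rest on the conjugation identity $\myfrakdeltau{d}=\delta\myfraku{d}\delta^{-1}$ from Lemma \ref{lemma:au_equals_av}(5) to translate scalars, factor $\delta$ out of a finite combination, and then invoke the basis property of $\mathcal{B}$ in $Q_u(V)$. Your packaging of the computation as transport of structure along the isomorphism of Lemma \ref{Vu}(4) is only a cosmetic difference; the underlying manipulations are identical to the paper's direct verification.
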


\begin{proof}
Let $u \in Q(V)^{*}, \delta \in F^{*}$ and $\mathcal{B}$ be a basis of $Q_{u}(V)$ over $\myfraku{d}$. By Lemma \ref{lemma:au_equals_av} (1), $\delta\mathcal{B} \subseteq Q_{\delta u}(V)$. Let $(\lambda_{b})_{b\in \mathcal{B}} \in \mathcal{K} (F^{(\mathcal{B})})$ such that $\sum_{b \in \mathcal{B}}\lambda_{b}\delta b = 0$. By Lemma \ref{lemma:au_equals_av} (5), since $\lambda_b \in \myfrakdu{d}$, there is $\gamma_{b} \in \myfraku{d}$ such that $\lambda_{b} = \delta \gamma_{b}\delta^{-1}$, for all $b \in \mathcal{B}$. Then,
\[
0 = \sum_{b \in \mathcal{B}}\lambda_{b}\delta b = \sum_{b \in \mathcal{B}}\delta \gamma_{b}\delta^{-1}\delta b = \sum_{b \in \mathcal{B}}\delta \gamma_{b}b = \delta \sum_{b \in \mathcal{B}}\gamma_{b}b.
\]
Since $\sum_{b \in \mathcal{B}}\gamma_{b}b = 0$ implies that $\gamma_{b} = 0$ for all $b \in \mathcal{B}$ (because $\mathcal{B}$ is a basis), it follows that $\lambda_{b} = 0$ for all $b \in \mathcal{B}$. Thus, $\delta \mathcal{B}$ is linearly independent.

Now, suppose $v \in Q_{\delta u}(V)$. By Lemma \ref{lemma:au_equals_av} (1), $\delta^{-1}v \in Q_{u}(V)$. Therefore, $v = \delta \sum_{b \in \mathcal{B}}\lambda_{b}b$ where $\lambda_{b} \in \myfraku{d}$ for all $b \in \mathcal{B}$. Note that $\delta \lambda_{b}\delta^{-1} \in (F_{\delta u})_{\mathfrak{d}}$ by Lemma \ref{lemma:au_equals_av} (5), and so we can write $v$ as $v = \sum_{b \in \mathcal{B}}\delta \lambda_{b}\delta^{-1}\delta b$. Thus, $\delta \mathcal{B}$ generates $Q_{\delta u}(V)$.
\end{proof}

The following result is the main finding of this paper, characterizing regularity in terms of the distributive decomposition of a regular near-vector space. It also elucidates the structure of the quasi-kernel along a full set of representatives of the multiplicative quotient of a non-zero element of the near-field with respect to a non-zero element of the quasi-kernel and its non-zero distributive elements.

\begin{theo}
\label{thm1}
Suppose that $V$ is a non-trivial near-vector space. Then the following assertions are equivalent:
\begin{enumerate}
    \item $V$ is regular;
    \item $Q(V) = \{ \lambda v \mid \lambda \in F, v \in Q_{u}(V) \}$ for any $u \in Q(V)^{*}$;
    \item $Q(V)^{*} = \bigcupdot_{s \in \mathcal{S}} Q_{su}^{*}(V)$, where $\mathcal{S}$ is a full set of representatives of the quotient $F_{u}^{*} / \myfraku{d}^{*}$ for some $u \in Q(V)^{*}$;
    \item $\mathbf{a}_u$ is a bijection, for some $u \in Q(V)^{*}$;
    \item $((F, +_v), (F, \cdot))$ and $((F, +_w), (F, \cdot))$ are $F$-isomorphic as left near-vector spaces for all $v, w \in Q(V)$;
    \item $Q(V) = F Q_u(V)$;
    \item $V \simeq F_u^{(I)}$, for some $u \in Q(V)^{*}$ and some set $I$;
    \item $V = \bigoplus_{j \in J} Q_{\delta_{j} u}(V)$, where $\{\delta_{j}\}_{j \in J}$ is some basis for $F$ over $\myfraku{d}$ for some $u \in Q(V)^{*}$. Moreover, $+_{\delta_{i} u} \neq +_{\delta_{j} u}$ for all $i, j \in J$;
    \item any basis $\mathcal{B}$ of $Q_u(V)$ as a $\myfraku{d}$ vector space is a scalar basis of $V$ as a near-vector space over $F$;
    \item there is a scalar basis $\mathcal{B}$ for $V$ in $Q(V)^{*}$ over $F$ such that $+_{b_1} = +_{b_2}$ for all $b_1, b_2 \in \mathcal{B}$.
\end{enumerate}
\end{theo}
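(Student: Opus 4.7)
The plan is to prove the ten equivalences by organizing them around condition (7), which gives the most concrete description of $V$. I would cluster the statements into three groups: the structural conditions (1), (2), (6), (7); the decomposition conditions (3), (4), (5); and the basis conditions (8), (9), (10). The core argument will be the short cycle (1)$\Rightarrow$(7)$\Rightarrow$(2)$\Rightarrow$(1), with each remaining condition inserted by a short argument leveraging the lemmas of Section 3.

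For the core cycle, (1)$\Rightarrow$(7) would start from a scalar basis $\{u_i\}_{i\in I}$ of $V$ (which exists by \cite[Corollary 3.6]{MarquesMoore}) together with a fixed $u \in Q(V)^*$. Regularity, combined with \cite[Lemma 2.4-12, Theorem 2.4-15]{DeBruyn}, produces $\lambda_i \in F^*$ with $+_{u_i} = +_{\lambda_i u}$, and replacing each $u_i$ by $\lambda_i^{-1} u_i$ (still a scalar basis, as recalled after Definition \ref{NVS2}) places every basis element in $Q_u(V)$; sending $u_i$ to the $i$th standard generator of $F_u^{(I)}$ then extends to the desired $F$-isomorphism. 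The implication (7)$\Rightarrow$(2) follows by combining Definition-Lemma \ref{F^I} with Lemma \ref{Lem1}(2) at $\delta = 1$ to read off $Q(V) = FQ_u(V)$, which simultaneously yields (6). Finally, (2)$\Rightarrow$(1) is immediate: for $u,v \in Q(V)^*$, writing $v = \lambda w$ with $w \in Q_u(V)$ gives $u + \lambda^{-1}v = u + w \in Q_u(V) \subseteq Q(V)$, witnessing compatibility.

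The decomposition conditions follow from (2) and Lemma \ref{lemma:au_equals_av}. For (3) and (4), parts (2) and (3) of that lemma identify the classes $[s]_{u,\mathfrak{d}}$ with the distinct additions $+_{su}$, so (3) is the disjoint refinement of (2) and (4) is the same assertion phrased as bijectivity of $\mathbf{a}_u$. For (5), regularity and the remark preceding Lemma \ref{Lem1} give $\lambda \in F^*$ with $+_w = +_{\lambda v}$; right multiplication $x \mapsto x\lambda^{-1}$ is then an $F$-linear group isomorphism from $(F,+_v)$ to $(F,+_w)$, using the explicit identity $\alpha +_{\lambda v}\beta = (\alpha\lambda +_v \beta\lambda)\lambda^{-1}$.

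For the basis conditions, I would fix a basis $\mathcal{B}$ of $Q_u(V)$ over $\myfraku{d}$ (available by Lemma \ref{Vu}(1)) and a basis $\{\delta_j\}_{j \in J}$ of $F$ over $\myfraku{d}$. Corollary \ref{deltabasis} shows that $\delta_j\mathcal{B}$ is a basis of $Q_{\delta_j u}(V)$ over $\myfrakdu{d}$, and combined with (2) the family $\bigcup_{j} \delta_j\mathcal{B}$ spans $V$ and is $F$-linearly independent, giving both (8) and (9); the inequality $+_{\delta_i u} \neq +_{\delta_j u}$ is Lemma \ref{lemma:au_equals_av}(4). The implication (9)$\Rightarrow$(10) is trivial, while (10)$\Rightarrow$(1) reduces to observing that any $v \in Q(V)^*$ expands in a common-addition scalar basis with coefficients whose placement forces $v \in FQ_u(V)$, returning to (2). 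The main obstacle I expect is the implication (1)$\Rightarrow$(7), as it is the only step requiring a genuine construction of an isomorphism rather than a careful chase through the definitions and lemmas already established in Sections 1 and 3.
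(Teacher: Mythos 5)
Your overall architecture is sound and, in substance, close to the paper's, but you route the ``hard'' equivalences differently: the paper disposes of $(1)\Leftrightarrow(2)$, $(1)\Leftrightarrow(6)$ and $(1)\Leftrightarrow(7)$ by citing \cite[Satz 5.1]{Andre} and \cite[Theorems 2.5-1, 2.5-2]{DeBruyn}, and then does its constructive work on the cycle $(7)\Rightarrow(8)\Rightarrow(7)$ and $(8)\Rightarrow(9)\Rightarrow(10)\Rightarrow(7)$, whereas you propose to prove $(1)\Rightarrow(7)\Rightarrow(2)\Rightarrow(1)$ directly. Your $(1)\Rightarrow(7)$ (normalize a scalar basis into $Q_u(V)$ using $+_{u_i}=+_{\lambda_i u}$ and map to standard generators) is essentially the content of the cited \cite[Theorem 2.5-2]{DeBruyn}, and your $(2)\Rightarrow(1)$ via $u+w\in Q_u(V)$ (closure of $Q_u(V)$ under addition, Lemma \ref{Vu}(1)) is a clean replacement for the citation of \cite[Satz 5.1]{Andre}. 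What your approach buys is self-containedness; what it costs is that you must supply the details the citations would have carried, in particular the $F$-linear independence of $\bigcup_j\delta_j\mathcal{B}$ and the directness of the sum in $(8)$, which you assert but which in the paper are proved by pushing everything through the explicit model $F_u^{(I)}$ and using that $\{\delta_j\}_{j\in J}$ is a basis of $F$ over $\myfraku{d}$. One quantifier also needs a word: $(7)\Rightarrow(2)$ as you run it gives $Q(V)=FQ_{u_0}(V)$ only for the particular $u_0$ of $(7)$, while $(2)$ is stated for every $u\in Q(V)^*$; this is easily repaired via Lemma \ref{lemma:au_equals_av}(1), but it must be said, and your $(2)\Rightarrow(1)$ as written silently uses the ``for any $u$'' form.

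The one concrete gap is condition $(5)$: you only establish the forward direction $(2)\Rightarrow(5)$ (right multiplication by $\lambda^{-1}$, exactly as in the paper) and never return from $(5)$ to anything else, so as proposed $(5)$ is not proved equivalent to the remaining conditions. The missing step is short but not vacuous: given an $F$-isomorphism $\theta:F_v\to F_w$, one uses $\theta(\alpha)=\alpha\,\theta(1)$ to deduce $\alpha+_v\beta=\alpha+_{\theta(1)w}\beta$, i.e.\ $+_v=+_{\theta(1)w}$, which is the paper's argument for $(5)\Rightarrow(2)$. Similarly, your treatment of $(10)$ closes the loop only by an ``observation'' that hides the construction of the isomorphism $V\to F_u^{(\mathcal{B})}$ and the identification of $Q(F_u^{(\mathcal{B})})$ from Definition-Lemma \ref{F^I}; that is exactly the paper's $(10)\Rightarrow(7)$ and should be written out rather than waved at.
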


\begin{proof} 
$(1) \Leftrightarrow (2)$ This follows from \cite[Satz 5.1]{Andre} or \cite[Theorem 2.5-1]{DeBruyn}. \\
$(2) \Rightarrow (3)$ Let $v \in Q(V)^{*}$. By (2), there exists $\lambda \in F^{*}$ such that $+_{v} = +_{\lambda u}$. Hence, there is $s \in \mathcal{S}$ such that $\lambda = \alpha s$ with $\alpha \in \myfraku{d}^{*}$. By Lemma \ref{lemma:au_equals_av} (2), $+_{\alpha s u} = +_{su}$, and so $+_{v} = +_{su}$. Therefore, $v \in \bigcup_{s \in \mathcal{S}} Q_{su}^{*}(V)$. Since $\mathcal{S}$ is a full set of representatives of the quotient $F_{u}^{*} / \myfraku{d}^{*}$, we conclude that $Q(V)^{*} = \bigcupdot_{s \in \mathcal{S}} Q_{su}^{*}(V)$. \\
$(3) \Rightarrow (2)$ This is clear. \\
$(2) \Leftrightarrow (4)$ This follows from Lemma \ref{lemma:au_equals_av} (2). \\
$(2) \Rightarrow (5)$ Let $v, w \in Q(V)^{*}$. By (2), there exists $\lambda \in F^{*}$ such that $w = \lambda v$. Consider the map $\theta: F_{v} \rightarrow F_{w}$ sending $\alpha$ to $\alpha \lambda^{-1}$. Let $\alpha, \beta \in F$. Then, 
\[
\theta(\alpha +_{v} \beta) = (\alpha +_{v} \beta) \lambda^{-1} 
\]
and 
\[
\theta(\alpha) +_{w} \theta(\beta) = \alpha \lambda^{-1} +_{w} \beta \lambda^{-1} = \alpha \lambda^{-1} +_{\lambda v} \beta \lambda^{-1} = (\alpha \lambda^{-1} \lambda +_{v} \beta \lambda^{-1} \lambda) \lambda^{-1} = (\alpha +_{v} \beta) \lambda^{-1}.
\]
Thus, $\theta(\alpha +_{v} \beta) = \theta(\alpha) +_{w} \theta(\beta)$. \\
$(5) \Rightarrow (2)$ Let $v, w \in Q(V)$ with $v \neq 0$. By (5), there is an $F$-isomorphism $\theta: F_{v} \rightarrow F_{w}$. Let $\alpha, \beta \in F$. Then, since $\alpha = \alpha \cdot 1$, we have $\theta(\alpha) = \theta(\alpha \cdot 1) = \alpha \theta(1)$. Moreover, $\theta(\alpha +_{v} \beta) = \theta(\alpha) +_{w} \theta(\beta)$, and so
\[
\alpha +_{v} \beta = \theta^{-1}(\theta(\alpha) +_{w} \theta(\beta)) = (\alpha \theta(1) +_{w} \beta \theta(1)) \theta(1)^{-1} = \alpha +_{\theta(1) w} \beta.
\]
This shows that $+_{v} = +_{\theta(1) w}$, and so $Q(V) \subseteq \{\lambda v \mid \lambda \in F, v \in Q_{u}(V) \}$ for any $u \in Q(V)^{*}$. The reverse inclusion is trivial. \\
$(6) \Leftrightarrow (1)$ This follows from \cite[Theorem 2.5-1]{DeBruyn}. \\
$(7) \Leftrightarrow (1)$ This follows from \cite[Theorem 2.5-2]{DeBruyn}.\\ 
 $(7) \Rightarrow (8)$  
Let $v \in V, u \in Q(V)^{*}$ and $\phi: V \rightarrow F_u^{(I)}$ be an $F$-isomorphism defined by sending $v$ to $(\lambda_{i})_{i \in I}$, where $\lambda_{i} \in F_{u}$ and $\lambda_{i} = 0$ for almost all $i \in I$. Since $F$ is a right vector space over $\myfraku{d}$ by \cite[Theorem 2.4-5]{DeBruyn}, it has a basis, say $\{\delta_{j}\}_{j \in J}$. Thus, we can write each $\lambda_{i}$ as $\lambda_{i} = \sum_{k \in K_{i}} \delta_{k} \alpha_{i_{k}}$ where $K_{i} \finsub J$ and $\alpha_{i_{k}} \in \myfraku{d}$ for all $k \in K_i$ and $i \in I$. Hence, $\phi(v)$ can be rewritten as 
\[
\phi(v) = \left(\sum_{k \in K_{i}} \delta_{k} \alpha_{i_{k}}\right)_{i \in I} = \sum_{k \in K_{i}} \delta_{k} (\alpha_{i_{k}})_{i \in I}.
\]
By Lemma \ref{Lem1}, we have $+_{\delta_{k} (\alpha_{i_{k}})_{i \in I}} = +_{\delta_{k} u}$ and $\phi(v) \in \phi(Q_{\delta_{k} u} (V))$ for all $k \in K_{i}$ and $i \in I$. This shows that $V \subseteq \sum_{j \in J} Q_{\delta_{j} u}(V)$. The reverse inclusion is trivial. By Lemma \ref{lemma:au_equals_av} (4), $+_{\delta_{i} u} \neq +_{\delta_{j} u}$ for all $i, j \in J$. 
Now we prove that the sum is direct. Suppose $\sum_{j \in J} v_{j} = 0$ where $\phi(v_{j}) = \delta_{j} (\alpha_{{j}_i})_{i \in I}$ and $(\alpha_{{j}_i})_{i \in I} \in F_u^{(I)}$ with almost all $\alpha_{{j}_i}$ being zero, for all $j \in J$. Then $\sum_{j \in J} \delta_{j} (\alpha_{j_{i}})_{i \in I} = \mathbf{0}$ implies that $\sum_{j \in J} \delta_{j} \alpha_{j_{i}} = 0$ for all $i \in I$, which implies that $\alpha_{j_{i}} = 0$ for all $i \in I$, since $\{\delta_{j}\}_{j \in J}$ is a basis for $F$. Hence, $\phi(v_{j}) = \mathbf{0}$ and so $v_{j} = 0$ for all $j \in J$.\\
$(8) \Rightarrow (7)$ Let $\mathcal{B}$ be a basis for $Q_{u}(V)$ as a vector space over $\myfraku{d}$. By Corollary \ref{deltabasis}, $\delta_{j} \mathcal{B}$ is a basis of $Q_{\delta_{j}u}(V)$ over $\myfrakdu{d}$ as a vector space for each $j \in J$. If $v \in V$, then $v = \sum_{j \in J} \sum_{b \in \mathcal{B}} \gamma_{b_{j}} \delta_{j} b$, where $\gamma_{b_{j}} \in \myfrakdu{d}$ and are almost all zero for all $b \in \mathcal{B}$ and $j \in J$.
Define the map $\psi: V \rightarrow F_{u}^{(\mathcal{B})}$ by sending $\sum_{j \in J} \sum_{b \in \mathcal{B}} \gamma_{b_{j}} \delta_{j} b$ to $\sum_{j \in J} \delta_{j} (\delta_{j}^{-1} \gamma_{b_{j}} \delta_{j})_{b \in \mathcal{B}}$. Let $v, w \in V$. We write $v = \sum_{j \in J} \sum_{b \in \mathcal{B}} \gamma_{b_{j}} \delta_{j} b$ and $w = \sum_{j \in J} \sum_{b \in \mathcal{B}} \gamma_{b_{j}}' \delta_{j} b$ as above. Then,
\begin{align*}
\psi(v + w) & = \psi\left(\sum_{j \in J} \sum_{b \in \mathcal{B}} \gamma_{b_{j}} \delta_{j} b + \sum_{j \in J} \sum_{b \in \mathcal{B}} \gamma_{b_{j}}' \delta_{j} b\right) \\
& = \psi\left(\sum_{j \in J} \delta_{j} \sum_{b \in \mathcal{B}} (\delta_{j}^{-1} \gamma_{b_{j}} \delta_{j} +_{u} \delta_{j}^{-1} \gamma_{b_{j}}' \delta_{j}) b\right) \\
& = \sum_{j \in J} \delta_{j} (\delta_{j}^{-1} \gamma_{b_{j}} \delta_{j} +_{u} \delta_{j}^{-1} \gamma_{b_{j}}' \delta_{j})_{b \in \mathcal{B}} \\
& = \sum_{j \in J} \delta_{j} (\delta_{j}^{-1} \gamma_{b_{j}} \delta_{j})_{b \in \mathcal{B}} + \sum_{j \in J} \delta_{j} (\delta_{j}^{-1} \gamma_{b_{j}}' \delta_{j})_{b \in \mathcal{B}} \\
& = \psi(v) + \psi(w).
\end{align*}
Let $\alpha \in F$. We have
\begin{align*}
    \alpha v &= \alpha \left( \sum_{j \in J} \sum_{b \in \mathcal{B}} \gamma_{b_{j}} \delta_{j} b \right)  = \alpha \left( \sum_{j \in J} \sum_{b \in \mathcal{B}} \delta_{j} (\delta_{j}^{-1} \gamma_{b_{j}} \delta_{j}) b \right) \\
    & = \alpha \sum_{b \in \mathcal{B}} \left( {}^{u \!} \!\sum_{j \in J} \delta_{j} (\delta_{j}^{-1} \gamma_{b_{j}} \delta_{j}) \right) b \\
    & = \sum_{b \in \mathcal{B}} \alpha \left( {}^{u \!} \!\sum_{j \in J} \delta_{j} (\delta_{j}^{-1} \gamma_{b_{j}} \delta_{j}) \right) b  = \sum_{b \in \mathcal{B}} \left( {}^{u \!} \!\sum_{j \in J} \delta_{j} \rho_{b_{j}} \right) b \\
    & = \sum_{j \in J} \sum_{b \in \mathcal{B}} \delta_{j} \rho_{b_{j}} b = \sum_{j \in J} \sum_{b \in \mathcal{B}} (\delta_{j} \rho_{b_{j}} \delta_{j}^{-1}) \delta_{j} b
\end{align*}
where $\alpha \left( {}^{u \!} \!\sum_{j \in J} \delta_{j} (\delta_{j}^{-1} \alpha_{b_{j}} \delta_{j}) \right) = {}^{u \!} \!\sum_{j \in J} \delta_{j} \rho_{b_{j}}$ for some $\rho_{b_{j}} \in \myfraku{d}$ for each $b \in \mathcal{B}$ and $j \in J$. Then,

\[
\psi(\alpha v) = \sum_{j \in J} \delta_{j} (\rho_{b_{j}})_{b \in \mathcal{B}}.
\]
Furthermore,
\begin{align*}
    \alpha \psi(v) &= \alpha \sum_{j \in J} \delta_{j} (\delta_{j}^{-1} \gamma_{b_{j}} \delta_{j})_{b \in \mathcal{B}} \\
    & = \alpha \left( {}^{u \!} \!\sum_{j \in J} \delta_{j} (\delta_{j}^{-1} \gamma_{b_{j}} \delta_{j}) \right)_{b \in \mathcal{B}} = \left( {}^{u \!} \!\sum_{j \in J} \delta_{j} \rho_{b_{j}} \right)_{b \in \mathcal{B}} \\
    & = \sum_{j \in J} \delta_{j} (\rho_{b_{j}})_{b \in \mathcal{B}} = \psi(\alpha v).
\end{align*}
Suppose $\sum_{j \in J} \delta_{j} (\delta_{j}^{-1} \gamma_{b_{j}} \delta_{j})_{b \in \mathcal{B}} = \mathbf{0}$. Then, $\left( {}^{u \!} \!\sum_{j \in J} \gamma_{b_{j}} \delta_{j} \right)_{b \in \mathcal{B}} = \mathbf{0}$, so that ${}^{u \!} \!\sum_{j \in J} \gamma_{b_{j}} \delta_{j} = 0$ for all $b \in \mathcal{B}$. Since $\{\delta_{j}\}_{j \in J}$ is a basis for $F$, this implies that $\gamma_{b_{j}} = 0$ for all $j \in J$ and $b \in \mathcal{B}$, which proves that $\psi$ is injective.
Let $x \in F_{u}^{(\mathcal{B})}$. Then, $x = (\lambda_{b})_{b \in \mathcal{B}}$, where $\lambda_{b} \in F$ for all $b \in \mathcal{B}$ and $\lambda_{b} \neq 0$ for a finite number of $b \in \mathcal{B}$. We have $\lambda_{b} = \sum_{j \in J} \gamma_{b_{j}} \delta_{j}$ for all $b \in \mathcal{B}$, since $\{\delta_{j}\}_{j \in J}$ is a basis for $F$. So, we see that
\begin{align*}
    x = \left( {}^{u \!} \!\sum_{j \in J} \gamma_{b_{j}} \delta_{j} \right)_{b \in \mathcal{B}} = \sum_{j \in J} (\gamma_{b_{j}} \delta_{j})_{b \in \mathcal{B}} = \sum_{j \in J} \delta_{j} (\delta_{j}^{-1} \gamma_{b_{j}} \delta_{j})_{b \in \mathcal{B}}.
\end{align*}
This proves that $\psi$ is surjective. \\
$(8) \Rightarrow (9)$ Let $\mathcal{B}$ be a basis of $Q_{u}(V)$ as a vector space over $\myfraku{d}$. By Corollary \ref{deltabasis}, $\delta_{j} \mathcal{B}$ is a basis of $Q_{\delta_{j} u}(V)$ over $\myfrakdu{d}$ as a vector space for each $j \in J$. Therefore, for $v \in V$, we have
\[
v = \sum_{j \in J} \sum_{b \in \mathcal{B}} \gamma_{b_{j}} \delta_{j} b = \sum_{b \in \mathcal{B}} \left( {}^{u \!} \!\sum_{j \in J} \gamma_{b_{j}} \delta_{j} \right) b
\]
where $\gamma_{b_{j}} \in \myfrakdu{d}$ for all $b \in \mathcal{B}$ and $j \in J$. Thus, $\mathcal{B}$ generates $V$. 
Now, suppose that $\sum_{b \in \mathcal{B}} \left( {}^{u \!} \!\sum_{j \in J} \gamma_{b_{j}} \delta_{j} \right) b = 0$, where $\gamma_{b_{j}} \in \myfrakdu{d}$ and are almost all zero for all $b \in \mathcal{B}$ and $j \in J$. This is equivalent to ${}^{u} \sum_{j \in J} \left(\sum_{b \in \mathcal{B}} \gamma_{b_{j}} \delta_{j} b \right) = 0$. Since $V = \bigoplus_{j \in J} Q_{\delta_{j} u}(V)$, we have $\sum_{b \in \mathcal{B}} \gamma_{b_{j}} \delta_{j} b = 0$ for all $j \in J$. 
Since $\mathcal{B}$ is a basis of $Q_{u}(V)$, this implies that $\gamma_{b_{j}} \delta_{j} = 0$ for all $b \in \mathcal{B}$ and $j \in J$, which in turn implies that $\gamma_{b_{j}} = 0$ for all $b \in \mathcal{B}$ and $j \in J$. This shows that $\mathcal{B}$ is a scalar basis of $V$.
 \\
$(9) \Rightarrow (10)$ This is clear.
 \\
$(10) \Rightarrow (7)$ Let $\mathcal{B}$ be a scalar basis for $V$ and define $f: V \rightarrow F_{u}^{(\mathcal{B})}$ by $f(v) = f\left(\sum_{b \in \mathcal{B}} \lambda_{b} b \right) = (\lambda_{b})_{b \in \mathcal{B}}$, where $\lambda_{b} \in F$ for all $b \in \mathcal{B}$. Choose $u \in \mathcal{B}$. Then, $+_{u} = +_{b}$ for all $b \in \mathcal{B}$. Thus,
\begin{align*}
    f(v+v') &= f\left(\sum_{b \in \mathcal{B}} \lambda_{b} b + \sum_{b \in \mathcal{B}} \lambda_{b}' b \right) = f\left(\sum_{b \in \mathcal{B}} (\lambda_{b} +_{b} \lambda_{b}') b\right) = f\left(\sum_{b \in \mathcal{B}} (\lambda_{b} +_{u} \lambda_{b}') b\right) \\
    & = (\lambda_{b} +_{u} \lambda_{b}')_{b \in \mathcal{B}} = (\lambda_{b})_{b \in \mathcal{B}} + (\lambda_{b}')_{b \in \mathcal{B}} \\
    &= f(v) + f(v').
\end{align*}
Together with \cite[Theorem 2.4-9]{DeBruyn}, this proves that $f$ is an isomorphism.

\end{proof}
We provide an example to illustrate how the theorem works in practice using a Dickson near-field.
\begin{exam}
\label{5^2}
 Consider the finite field \((\mathbb{F}_{5^2}, +, \cdot)\) and let \(\mathbf{0}, \mathbf{1}, \mathbf{2}, \mathbf{3}, \mathbf{4}\) represent the equivalence classes of \(\mathbb{F}_5\). The elements of \(\mathbb{F}_{5^2}\) are:
\[
\begin{array}{lll} GF(5^2) &:=& \{\mathbf{0}, \mathbf{1}, \mathbf{2}, \mathbf{3}, \mathbf{4}, \gamma, \mathbf{1} + \gamma, \mathbf{2} + \gamma, \mathbf{3} + \gamma, \mathbf{4} + \gamma, \mathbf{2}\gamma, \mathbf{1} + \mathbf{2}\gamma, \mathbf{2} + \mathbf{2}\gamma, \mathbf{3} + \mathbf{2}\gamma, \mathbf{4} + \\
&& \mathbf{2}\gamma, \mathbf{3}\gamma, \mathbf{1} + \mathbf{3}\gamma, \mathbf{2} + \mathbf{3}\gamma, \mathbf{3} + \mathbf{3}\gamma, \mathbf{4} + \mathbf{3}\gamma, \mathbf{4}\gamma, \mathbf{1} + \mathbf{4}\gamma, \mathbf{2} + \mathbf{4}\gamma, \mathbf{3} + \mathbf{4}\gamma, \mathbf{4} + \mathbf{4}\gamma\}
\end{array}
\]
where \(\gamma\) is a root of \(x^2 - \mathbf{3} \in \mathbb{Z}_5[x]\). The operations on \(\mathbb{F}_{5^2}\) are defined as follows:
\[
(\mathbf{a} + \mathbf{b}\gamma) + (\mathbf{c} + \mathbf{d}\gamma) = (\mathbf{a} + \mathbf{c}) + (\mathbf{b} + \mathbf{d})\gamma,
\]
and
\[
(\mathbf{a} + \mathbf{b}\gamma) \cdot (\mathbf{c} + \mathbf{d}\gamma) = [\mathbf{ac} + \mathbf{3bd}] + [\mathbf{ad} + \mathbf{bc}]\gamma,
\]
for all \(\mathbf{a}, \mathbf{b}, \mathbf{c}, \mathbf{d} \in \mathbb{Z}_5\).

The set of non-zero squares in \(\mathbb{F}_{5^2}\), denoted by \(S\), is:
\[
S = \{\mathbf{1}, \mathbf{2}, \mathbf{3}, \mathbf{4}, \mathbf{2} + \gamma, \mathbf{3} + \gamma, \mathbf{1} + \mathbf{2}\gamma, \mathbf{4} + \mathbf{2}\gamma, \mathbf{1} + \mathbf{3}\gamma, \mathbf{4} + \mathbf{3}\gamma, \mathbf{2} + \mathbf{4}\gamma, \mathbf{3} + \mathbf{4}\gamma\}.
\]

We introduce a new operation \(\circ\) on \(\mathbb{F}_{5^2}\) as follows:
\[
x \circ y := \begin{cases}
x \cdot y & \text{if } x \text{ is a square in } (\mathbb{F}_{5^2}, +, \cdot) \\
x \cdot y^5 & \text{otherwise.}
\end{cases}
\]
See Table \ref{table1} for the multiplication table. Then, \((\mathbb{F}_{5^2}, +, \circ)\) forms a near-field (see \cite{pilz}), and \((\mathbb{F}_{5^2}, \circ)\) forms a scalar group. Let \(\myfrakk{d} = \mathbb{F}_5\) for any \(u \in Q(\mathbb{F}_{5^2})^*\). Then \((\mathbb{F}_{5^2}, +, \circ)\) is a near-vector space over itself with \(Q(\mathbb{F}_{5^2}) = \mathbb{F}_{5^2}\). Additionally, \((\mathbb{F}_{5^{2}}, +, \cdot)\) is a vector space over \(\mathbb{F}_5\) with \(\{\mathbf{1}, \gamma\}\) as a basis. The set of full representatives of the quotient \((\mathbb{F}_{5^2})_u^* / \mathbb{F}_5^*\) is \(\mathcal{S} = \{\mathbf{1}\} \cup \{d + \gamma \mid d \in \mathbb{F}_5\}\).
By Lemma \ref{lemma:au_equals_av}, one can deduce that \(Q_{\mathbf{1}}(\mathbb{F}_{5^2}) = \mathbb{F}_5\) and for each \(d \in \mathbb{F}_5\), \(Q_{d + \gamma}(\mathbb{F}_{5^2}) = (d + \gamma)\mathbb{F}_5\).
Furthermore, \(Q(\mathbb{F}_{5^2})^* = \bigcupdot_{s \in \mathcal{S}} Q_s(\mathbb{F}_{5^2})^*\). We also have:
\[
\mathbb{F}_{5^2} = Q_{\mathbf{1}}(\mathbb{F}_{5^2}) \bigoplus Q_{\gamma}(\mathbb{F}_{5^2}).
\]

By defining an action by endomorphism \(\diamond\) of \(\mathbb{F}_{5^2}\) on \((\mathbb{F}_{5^2})^2\) as:
\[
\alpha \diamond (x_1, x_2) = (\alpha \circ x_1, \alpha \circ x_2),
\]
we create the regular near-vector space \(((\mathbb{F}_{5^2})^2, +,\diamond)\). Using a similar method as in \cite[Example 1.8]{MarquesMoore}, we have:
\[
Q((\mathbb{F}_{5^2})) = \{\lambda (\alpha, \beta) \mid \lambda \in \mathbb{F}_{5^2} \text{ and } \alpha, \beta \in \mathbb{F}_5\} = \{(\lambda \alpha, \lambda \beta) \mid \lambda \in \mathbb{F}_{5^2} \text{ and } \alpha, \beta \in \mathbb{F}_5\}
\]
since \(\alpha^5 = \alpha\) for all \(\alpha \in \mathbb{F}_5\). Also, \(Q((\mathbb{F}_{5^2})^2)\) is closed under addition. Thus, we can deduce that \(Q_{(\mathbf{1}, \mathbf{1})}((\mathbb{F}_{5^2})^2) = (\mathbb{F}_5)^2\) and for each \(d \in \mathbb{F}_5\),
\[
Q_{d + \gamma}((\mathbb{F}_{5^2})^2) = (d + \gamma, 0)\mathbb{F}_5 \bigoplus (0, d + \gamma)\mathbb{F}_5.
\]
Furthermore, \(Q((\mathbb{F}_{5^2})^2)^* = \bigcupdot_{s \in \mathcal{S}} Q_s((\mathbb{F}_{5^2})^2)^*\). Also, we have:
\[
(\mathbb{F}_{5^2})^2 = Q_{(\mathbf{1}, \mathbf{1})}((\mathbb{F}_{5^2})^2) \bigoplus Q_{(\gamma, \gamma)}((\mathbb{F}_{5^2})^2).
\]
\end{exam}

\renewcommand{\arraystretch}{1.5}
\begin{landscape}
\begin{table}[ht]
\centering
\scalebox{0.62}{{\begin{tabular}{cc|ccccccccccccccccccccccccc}

 & $\circ$ & $\mathbf{0}$ & $\mathbf{1}$ & $\mathbf{2}$ & $\mathbf{3}$ & $\mathbf{4}$ & $\gamma$ & $\mathbf{1}+\gamma$ & $\mathbf{2}+\gamma$ & $\mathbf{3}+\gamma$ & $\mathbf{4}+\gamma$ & $\mathbf{2}\gamma$ & $\mathbf{1}+\mathbf{2}\gamma$ & $\mathbf{2}+\mathbf{2}\gamma$ & $\mathbf{3}+\mathbf{2}\gamma$ & $\mathbf{4}+\mathbf{2}\gamma$ & $\mathbf{3}\gamma$ & $\mathbf{1}+\mathbf{3}\gamma$ & $\mathbf{2}+\mathbf{3}\gamma$ & $\mathbf{3}+\mathbf{3}\gamma$ & $\mathbf{4}+\mathbf{3}\gamma$ & $\mathbf{4}\gamma$ & $\mathbf{1}+\mathbf{4}\gamma$ & $\mathbf{2}+\mathbf{4}\gamma$ & $\mathbf{3}+\mathbf{4}\gamma$ & $\mathbf{4}+\mathbf{4}\gamma$\\ 
  \hline
 & $\mathbf{0}$ & $\mathbf{0}$ & $\mathbf{0}$ & $\mathbf{0}$ & $\mathbf{0}$ &  $\mathbf{0}$ & $\mathbf{0}$ & $\mathbf{0}$ & $\mathbf{0}$ & $\mathbf{0}$ & $\mathbf{0}$ & $\mathbf{0}$ & $\mathbf{0}$ & $\mathbf{0}$ & $\mathbf{0}$ & $\mathbf{0}$ & $\mathbf{0}$ & $\mathbf{0}$ & $\mathbf{0}$ & $\mathbf{0}$ & $\mathbf{0}$ & $\mathbf{0}$ & $\mathbf{0}$ & $\mathbf{0}$ & $\mathbf{0}$ & $\mathbf{0}$ \\ 
  &$\mathbf{1}$ & $\mathbf{0}$ & $\mathbf{1}$ & $\mathbf{2}$ & $\mathbf{3}$ & $\mathbf{4}$ & $\gamma$ & $\mathbf{1}+\gamma$ & $\mathbf{2}+\gamma$ & $\mathbf{3}+\gamma$ & $\mathbf{4}+\gamma$ & $\mathbf{2}\gamma$ & $\mathbf{1}+\mathbf{2}\gamma$ & $\mathbf{2}+\mathbf{2}\gamma$ & $\mathbf{3}+\mathbf{2}\gamma$ & $\mathbf{4}+\mathbf{2}\gamma$ & $\mathbf{3}\gamma$ & $\mathbf{1}+\mathbf{3}\gamma$ & $\mathbf{2}+\mathbf{3}\gamma$ & $\mathbf{3}+\mathbf{3}\gamma$ & $\mathbf{4}+\mathbf{3}\gamma$ & $\mathbf{4}\gamma$ & $\mathbf{1}+\mathbf{4}\gamma$ & $\mathbf{2}+\mathbf{4}\gamma$ & $\mathbf{3}+\mathbf{4}\gamma$ & $\mathbf{4}+\mathbf{4}\gamma$ \\ 
  &$\mathbf{2}$ & $\mathbf{0}$ & $\mathbf{2}$ & $\mathbf{4}$ & $\mathbf{1}$ & $\mathbf{3}$ & $\mathbf{2}\gamma$ & $\mathbf{2}+\mathbf{2}\gamma$ & $\mathbf{4}+\mathbf{2}\gamma$ & $\mathbf{1}+\mathbf{2}\gamma$ & $\mathbf{3}+\mathbf{2}\gamma$ & $\mathbf{4}\gamma$ & $\mathbf{2}+\mathbf{4}\gamma$ & $\mathbf{4}+\mathbf{4}\gamma$ & $\mathbf{1}+\mathbf{4}\gamma$ & $\mathbf{3}+\mathbf{4}\gamma$ & $\gamma$ & $\mathbf{2}+\gamma$ & $\mathbf{4}+\gamma$ & $\mathbf{1}+\gamma$ & $\mathbf{3}+\gamma$ & $\mathbf{3}\gamma$ & $\mathbf{2}+\mathbf{3}\gamma$ & $\mathbf{4}+\mathbf{3}\gamma$ & $\mathbf{1}+\mathbf{3}\gamma$ & $\mathbf{3}+\mathbf{3}\gamma$ \\ 
  &$\mathbf{3}$ & $\mathbf{0}$ & $\mathbf{3}$ & $\mathbf{1}$ & $\mathbf{4}$ & $\mathbf{2}$ & $\mathbf{3}\gamma$ & $\mathbf{3}+\mathbf{3}\gamma$ & $\mathbf{1}+\mathbf{3}\gamma$ & $\mathbf{4}+\mathbf{3}\gamma$ & $\mathbf{2}+\mathbf{3}\gamma$ & $\gamma$ & $\mathbf{3}+\gamma$ & $\mathbf{1}+\gamma$ & $\mathbf{4}+\gamma$ & $\mathbf{2}+\gamma$ & $\mathbf{4}\gamma$ & $\mathbf{3}+\mathbf{4}\gamma$ & $\mathbf{1}+\mathbf{4}\gamma$ & $\mathbf{4}+\mathbf{4}\gamma$ & $\mathbf{2}+\mathbf{4}\gamma$ & $\mathbf{2}\gamma$ & $\mathbf{3}+\mathbf{2}\gamma$ & $\mathbf{1}+\mathbf{2}\gamma$ & $\mathbf{4}+\mathbf{2}\gamma$ & $\mathbf{2}+\mathbf{2}\gamma$ \\ 
  &$\mathbf{4}$ & $\mathbf{0}$ & $\mathbf{4}$ & $\mathbf{3}$ & $\mathbf{2}$ & $\mathbf{1}$ & $\mathbf{4}\gamma$ & $\mathbf{4}+\mathbf{4}\gamma$ & $\mathbf{3}+\mathbf{4}\gamma$ & $\mathbf{2}+\mathbf{4}\gamma$ & $\mathbf{1}+\mathbf{4}\gamma$ & $\mathbf{3}\gamma$ & $\mathbf{4}+\mathbf{3}\gamma$ & $\mathbf{3}+\mathbf{3}\gamma$ & $\mathbf{2}+\mathbf{3}\gamma$ & $\mathbf{1}+\mathbf{3}\gamma$ & $\mathbf{2}\gamma$ & $\mathbf{4}+\mathbf{2}\gamma$ & $\mathbf{3}+\mathbf{2}\gamma$ & $\mathbf{2}+\mathbf{2}\gamma$ & $\mathbf{1}+\mathbf{2}\gamma$ & $\gamma$ & $\mathbf{4}+\gamma$ & $\mathbf{3}+\gamma$ & $\mathbf{2}+\gamma$ & $\mathbf{1}+\gamma$ \\ 
  &$\gamma$ & $\mathbf{0}$ & $\gamma$ & $\mathbf{2}\gamma$ & $\mathbf{3}\gamma$ & $\mathbf{4}\gamma$ & $\mathbf{2}$ & $\mathbf{2}+\gamma$ & $\mathbf{2}+\mathbf{2}\gamma$ & $\mathbf{2}+\mathbf{3}\gamma$ & $\mathbf{2}+\mathbf{4}\gamma$ & $\mathbf{4}$ & $\mathbf{4}+\gamma$ & $\mathbf{4}+\mathbf{2}\gamma$ & $\mathbf{4}+\mathbf{3}\gamma$ & $\mathbf{4}+\mathbf{4}\gamma$ & $\mathbf{1}$ & $\mathbf{1}+\gamma$ & $\mathbf{1}+\mathbf{2}\gamma$ & $\mathbf{1}+\mathbf{3}\gamma$ & $\mathbf{1}+\mathbf{4}\gamma$ & $\mathbf{3}$ & $\mathbf{3}+\gamma$ & $\mathbf{3}+\mathbf{2}\gamma$ & $\mathbf{3}+\mathbf{3}\gamma$ & $\mathbf{3}+\mathbf{4}\gamma$ \\ 
  &$\mathbf{1}+\gamma$ & $\mathbf{0}$ & $\mathbf{1}+\gamma$ & $\mathbf{2}+\mathbf{2}\gamma$ & $\mathbf{3}+\mathbf{3}\gamma$ & $\mathbf{4}+\mathbf{4}\gamma$ & $\mathbf{2}+\mathbf{4}\gamma$ & $\mathbf{3}$ & $\mathbf{4}+\gamma$ & $\mathbf{2}\gamma$ & $\mathbf{1}+\mathbf{3}\gamma$ & $\mathbf{4}+\mathbf{3}\gamma$ & $\mathbf{4}\gamma$ & $\mathbf{1}$ & $\mathbf{2}+\gamma$ & $\mathbf{3}+\mathbf{2}\gamma$ & $\mathbf{1}+\mathbf{2}\gamma$ & $\mathbf{2}+\mathbf{3}\gamma$ & $\mathbf{3}+\mathbf{4}\gamma$ & $\mathbf{4}$ & $\gamma$ & $\mathbf{3}+\gamma$ & $\mathbf{4}+\mathbf{2}\gamma$ & $\mathbf{3}\gamma$ & $\mathbf{1}+\mathbf{4}\gamma$ & $\mathbf{2}$ \\ 
  &$\mathbf{2} +\gamma$ & $\mathbf{0}$ & $\mathbf{2}+\gamma$ & $\mathbf{4}+\mathbf{2}\gamma$ & $\mathbf{1}+\mathbf{3}\gamma$ & $\mathbf{3}+\mathbf{4}\gamma$ & $\mathbf{3}+\mathbf{2}\gamma$ & $\mathbf{3}\gamma$ & $\mathbf{2}+\mathbf{4}\gamma$ & $\mathbf{4}$ & $\mathbf{1}+\gamma$ & $\mathbf{1}+\mathbf{4}\gamma$ & $\mathbf{3}$ & $\gamma$ & $\mathbf{2}+\mathbf{2}\gamma$ & $\mathbf{4}+\mathbf{3}\gamma$ & $\mathbf{4}+\gamma$ & $\mathbf{1}+\mathbf{2}\gamma$ & $\mathbf{3}+\mathbf{3}\gamma$ & $\mathbf{4}\gamma$ & $\mathbf{2}$ & $\mathbf{2}+\mathbf{3}\gamma$ & $\mathbf{4}+\mathbf{4}\gamma$ & $\mathbf{1}$ & $\mathbf{3}+\gamma$ & $\mathbf{2}\gamma$ \\ 
 &$\mathbf{3}+\gamma$ & $\mathbf{0}$ & $\mathbf{3}+\gamma$ & $\mathbf{1}+\mathbf{2}\gamma$ & $\mathbf{4}+\mathbf{3}\gamma$ & $\mathbf{2}+\mathbf{4}\gamma$ & $\mathbf{3}+\mathbf{3}\gamma$ & $\mathbf{1}+\mathbf{4}\gamma$ & $\mathbf{4}$ & $\mathbf{2}+\gamma$ & $\mathbf{2}\gamma$ & $\mathbf{1}+\gamma$ & $\mathbf{4}+\mathbf{2}\gamma$ & $\mathbf{2}+\mathbf{3}\gamma$ & $\mathbf{4}\gamma$ & $\mathbf{3}$ & $\mathbf{4}+\mathbf{4}\gamma$ & $\mathbf{2}$ & $\gamma$ & $\mathbf{3}+\mathbf{2}\gamma$ & $\mathbf{1}+\mathbf{3}\gamma$ & $\mathbf{2}+\mathbf{2}\gamma$ & $\mathbf{3}\gamma$ & $\mathbf{3}+\mathbf{4}\gamma$ & $\mathbf{1}$ & $\mathbf{4}+\gamma$ \\ 
  &$\mathbf{4} +\gamma$ & $\mathbf{0}$ & $\mathbf{4}+\gamma$ & $\mathbf{3}+\mathbf{2}\gamma$ & $\mathbf{2}+\mathbf{3}\gamma$ & $\mathbf{1}+\mathbf{4}\gamma$ & $\mathbf{2}+\gamma$ & $\mathbf{1}+\mathbf{2}\gamma$ & $\mathbf{3}\gamma$ & $\mathbf{4}+\mathbf{4}\gamma$ & $\mathbf{3}$ & $\mathbf{4}+\mathbf{2}\gamma$ & $\mathbf{3}+\mathbf{3}\gamma$ & $\mathbf{2}+\mathbf{4}\gamma$ & $\mathbf{1}$ & $\gamma$ & $\mathbf{1}+\mathbf{3}\gamma$ & $\mathbf{4}\gamma$ & $\mathbf{4}$  & $\mathbf{3}+\gamma$ & $\mathbf{2}+\mathbf{2}\gamma$ & $\mathbf{3}+\mathbf{4}\gamma$ & $\mathbf{2}$ & $\mathbf{1}+\gamma$ &  $\mathbf{2}\gamma$& $\mathbf{4}+\mathbf{3}\gamma$  \\ 
  &$\mathbf{2}\gamma$ & $\mathbf{0}$ & $\mathbf{2}\gamma$ & $\mathbf{4}\gamma$ & $\gamma$ & $\mathbf{3}\gamma$ &  $\mathbf{4}$ & $\mathbf{4}+\mathbf{2}\gamma$ & $\mathbf{4}+\mathbf{4}\gamma$ & $\mathbf{4}+\gamma$ & $\mathbf{4}+\mathbf{3}\gamma$ & $\mathbf{3}$ & $\mathbf{3}+\mathbf{2}\gamma$ & $\mathbf{3}+\mathbf{4}\gamma$ & $\mathbf{3}+\gamma$ & $\mathbf{3}+\mathbf{3}\gamma$ & $\mathbf{2}$ & $\mathbf{2}+\mathbf{2}\gamma$ & $\mathbf{2}+\mathbf{4}\gamma$ & $\mathbf{2}+\gamma$ & $\mathbf{2}+\mathbf{3}\gamma$ & $\mathbf{1}$ & $\mathbf{1}+\mathbf{2}\gamma$ & $\mathbf{1}+\mathbf{4}\gamma$ & $\mathbf{1}+\gamma$ & $\mathbf{1}+\mathbf{3}\gamma$  \\ 
  &$\mathbf{1} +\mathbf{2}\gamma$ & $\mathbf{0}$ & $\mathbf{1}+\mathbf{2}\gamma$ & $\mathbf{2}+\mathbf{4}\gamma$ & $\mathbf{3}+\gamma$ & $\mathbf{4}+\mathbf{3}\gamma$ & $\mathbf{1}+\gamma$ & $\mathbf{2}+\mathbf{3}\gamma$ & $\mathbf{3}$ & $\mathbf{4}+\mathbf{2}\gamma$ & $\mathbf{4}\gamma$ & $\mathbf{2}+\mathbf{2}\gamma$ & $\mathbf{3}+\mathbf{4}\gamma$ & $\mathbf{4}+\gamma$ & $\mathbf{3}\gamma$ & $\mathbf{1}$ & $\mathbf{3}+\mathbf{3}\gamma$ & $\mathbf{4}$ & $\mathbf{2}\gamma$ & $\mathbf{1}+\mathbf{4}\gamma$ & $\mathbf{2}+\gamma$ & $\mathbf{4}+\mathbf{4}\gamma$ & $\gamma$ & $\mathbf{1}+\mathbf{3}\gamma$ & $\mathbf{2}$ & $\mathbf{3}+\mathbf{2}\gamma$ \\ 
  &$\mathbf{2} +\mathbf{2}\gamma$ & $\mathbf{0}$ & $\mathbf{2}+\mathbf{2}\gamma$ & $\mathbf{4}+\mathbf{4}\gamma$ & $\mathbf{1}+\gamma$ & $\mathbf{3}+\mathbf{3}\gamma$ & $\mathbf{4}+\mathbf{3}\gamma$ & $\mathbf{1}$ & $\mathbf{3}+\mathbf{2}\gamma$ & $\mathbf{4}\gamma$ & $\mathbf{2}+\gamma$ & $\mathbf{3}+\gamma$ &  $\mathbf{3}\gamma$& $\mathbf{2}$ & $\mathbf{4}+\mathbf{2}\gamma$ & $\mathbf{1}+\mathbf{4}\gamma$ & $\mathbf{2}+\mathbf{4}\gamma$ & $\mathbf{4}+\gamma$ & $\mathbf{1}+\mathbf{3}\gamma$ & $\mathbf{3}$ & $\mathbf{2}\gamma$ & $\mathbf{2}+\mathbf{2}\gamma$ & $\mathbf{3}+\mathbf{4}\gamma$ & $\gamma$ & $\mathbf{2}+\mathbf{3}\gamma$ & $\mathbf{4}$ \\
  &$\mathbf{3} +\mathbf{2}\gamma$ & $\mathbf{0}$ & $\mathbf{3}+\mathbf{2}\gamma$ & $\mathbf{1}+\mathbf{4}\gamma$ & $\mathbf{4}+\gamma$ & $\mathbf{2}+\mathbf{3}\gamma$ & $\mathbf{4}+\mathbf{2}\gamma$ & $\mathbf{2}+\mathbf{4}\gamma$ & $\gamma$ & $\mathbf{3}+\mathbf{3}\gamma$ & $\mathbf{1}$ & $\mathbf{3}+\mathbf{4}\gamma$ & $\mathbf{1}+\gamma$ & $\mathbf{4}+\mathbf{3}\gamma$ & $\mathbf{2}$ & $\mathbf{2}\gamma$ & $\mathbf{2}+\gamma$ & $\mathbf{3}\gamma$ & $\mathbf{3}$ & $\mathbf{1}+\mathbf{2}\gamma$ & $\mathbf{4}+\mathbf{4}\gamma$ & $\mathbf{1}+\mathbf{3}\gamma$ & $\mathbf{4}$ & $\mathbf{2}+\mathbf{2}\gamma$ & $\mathbf{4}\gamma$ & $\mathbf{3}+\gamma$  \\
  &$\mathbf{4} +\mathbf{2}\gamma$ & $\mathbf{0}$ & $\mathbf{4}+\mathbf{2}\gamma$ & $\mathbf{3}+\mathbf{4}\gamma$ & $\mathbf{2}+\gamma$ & $\mathbf{1}+\mathbf{3}\gamma$ & $\mathbf{1}+\mathbf{4}\gamma$ & $\gamma$ & $\mathbf{4}+\mathbf{3}\gamma$ & $\mathbf{3}$ & $\mathbf{2}+\mathbf{2}\gamma$ & $\mathbf{2}+\mathbf{3}\gamma$ & $\mathbf{1}$ & $\mathbf{2}\gamma$ & $\mathbf{4}+\mathbf{4}\gamma$ & $\mathbf{3}+\gamma$ & $\mathbf{3}+\mathbf{2}\gamma$ & $\mathbf{2}+\mathbf{4}\gamma$ & $\mathbf{1}+\gamma$ & $\mathbf{3}\gamma$ & $\mathbf{4}$ & $\mathbf{4}+\gamma$ & $\mathbf{3}+\mathbf{3}\gamma$ & $\mathbf{2}$ & $\mathbf{1}+\mathbf{2}\gamma$ & $\mathbf{4}\gamma$ \\
  &$\mathbf{3}\gamma$ & $\mathbf{0}$ & $\mathbf{3}\gamma$ & $\gamma$ & $\mathbf{4}\gamma$ & $\mathbf{2}\gamma$ & $\mathbf{1}$ & $\mathbf{1}+\mathbf{3}\gamma$ & $\mathbf{1}+\gamma$ & $\mathbf{1}+\mathbf{4}\gamma$ & $\mathbf{1}+\mathbf{2}\gamma$ & $\mathbf{2}$ & $\mathbf{2}+\mathbf{3}\gamma$ & $\mathbf{2}+\gamma$ & $\mathbf{2}+\mathbf{4}\gamma$ & $\mathbf{2}+\mathbf{2}\gamma$ & $\mathbf{3}$ & $\mathbf{3}+\mathbf{3}\gamma$ & $\mathbf{3}+\gamma$ & $\mathbf{3}+\mathbf{4}\gamma$ & $\mathbf{3}+\mathbf{2}\gamma$ & $\mathbf{4}$ & $\mathbf{4}+\mathbf{3}\gamma$ & $\mathbf{4}+\gamma$ & $\mathbf{4}+\mathbf{4}\gamma$ & $\mathbf{4}+\mathbf{2}\gamma$ \\
  &$\mathbf{1} + \mathbf{3}\gamma$ & $\mathbf{0}$ & $\mathbf{1}+\mathbf{3}\gamma$ & $\mathbf{2}+\gamma$ & $\mathbf{3}+\mathbf{4}\gamma$ & $\mathbf{4}+\mathbf{2}\gamma$ & $\mathbf{4}+\gamma$ & $\mathbf{4}\gamma$ & $\mathbf{1}+\mathbf{2}\gamma$ & $\mathbf{2}$ & $\mathbf{3}+\mathbf{3}\gamma$ & $\mathbf{3}+\mathbf{2}\gamma$ & $\mathbf{4}$ & $\mathbf{3}\gamma$ & $\mathbf{1}+\gamma$ & $\mathbf{2}+\mathbf{4}\gamma$ & $\mathbf{2}+\mathbf{3}\gamma$ & $\mathbf{3}+\gamma$ & $\mathbf{4}+\mathbf{4}\gamma$ & $\mathbf{2}\gamma$ & $\mathbf{1}$ & $\mathbf{1}+\mathbf{4}\gamma$ & $\mathbf{2}+\mathbf{2}\gamma$ & $\mathbf{3}$ & $\mathbf{4}+\mathbf{3}\gamma$ & $\gamma$ \\
  &$\mathbf{2} + \mathbf{3}\gamma$ & $\mathbf{0}$ & $\mathbf{2}+\mathbf{3}\gamma$ & $\mathbf{4}+\gamma$ & $\mathbf{1}+\mathbf{4}\gamma$ & $\mathbf{3}+\mathbf{2}\gamma$ & $\mathbf{1}+\mathbf{3}\gamma$ & $\mathbf{3}+\gamma$ & $\mathbf{4}\gamma$ & $\mathbf{2}+\mathbf{2}\gamma$ & $\mathbf{4}$ & $\mathbf{2}+\gamma$ & $\mathbf{4}+\mathbf{4}\gamma$ & $\mathbf{1}+\mathbf{2}\gamma$ & $\mathbf{3}$ & $\mathbf{3}\gamma$ & $\mathbf{3}+\mathbf{4}\gamma$ & $\mathbf{2}\gamma$ & $\mathbf{2}$ & $\mathbf{4}+\mathbf{3}\gamma$ & $\mathbf{1}+\gamma$ & $\mathbf{4}+\mathbf{2}\gamma$ & $\mathbf{1}$ & $\mathbf{3}+\mathbf{3}\gamma$ & $\mathbf{2}\gamma$ & $\mathbf{2}+\mathbf{4}\gamma$  \\
  &$\mathbf{3} +\mathbf{3}\gamma$ & $\mathbf{0}$ & $\mathbf{3}+\mathbf{3}\gamma$ & $\mathbf{1}+\gamma$ & $\mathbf{4}+\mathbf{4}\gamma$ & $\mathbf{2}+\mathbf{2}\gamma$ & $\mathbf{1}+\mathbf{2}\gamma$ & $\mathbf{4}$ & $\mathbf{2}+\mathbf{3}\gamma$ & $\gamma$ & $\mathbf{3}+\mathbf{4}\gamma$ & $\mathbf{2}+\mathbf{4}\gamma$ & $\mathbf{2}\gamma$ & $\mathbf{3}$ & $\mathbf{1}+\mathbf{3}\gamma$ & $\mathbf{4}+\gamma$ & $\mathbf{3}+\gamma$ & $\mathbf{1}+\mathbf{4}\gamma$ & $\mathbf{4}+\mathbf{2}\gamma$ & $\mathbf{2}$ & $\mathbf{3}\gamma$ & $\mathbf{4}+\mathbf{3}\gamma$ & $\mathbf{2}+\gamma$ & $\mathbf{4}\gamma$ & $\mathbf{3}+\mathbf{2}\gamma$ & $\mathbf{1}$ \\
  &$\mathbf{4}+\mathbf{3}\gamma$ & $\mathbf{0}$ & $\mathbf{4}+\mathbf{3}\gamma$ & $\mathbf{3}+\gamma$ & $\mathbf{2}+\mathbf{4}$ & $\mathbf{1}+\mathbf{2}\gamma$  & $\mathbf{4}+\mathbf{4}\gamma$ & $\mathbf{3}+\mathbf{2}\gamma$ & $\mathbf{2}$ & $\mathbf{1}+\mathbf{3}\gamma$ & $\gamma$ & $\mathbf{3}+\mathbf{3}\gamma$ & $\mathbf{2}+\gamma$ & $\mathbf{1}+\mathbf{4}\gamma$ & $\mathbf{2}\gamma$ & $\mathbf{4}$ & $\mathbf{2}+\mathbf{2}\gamma$ & $\mathbf{1}$ & $\mathbf{3}\gamma$ & $\mathbf{4}+\gamma$ & $\mathbf{3}+\mathbf{4}\gamma$ & $\mathbf{1}+\gamma$ & $\mathbf{4}\gamma$ & $\mathbf{4}+\mathbf{2}\gamma$ & $\mathbf{3}$ & $\mathbf{2}+\mathbf{3}\gamma$  \\
  &$\mathbf{4}\gamma$ & $\mathbf{0}$ & $\mathbf{4}\gamma$ & $\mathbf{3}\gamma$ & $\mathbf{2}\gamma$ & $\gamma$  & $\mathbf{3}$ & $\mathbf{3}+\mathbf{4}\gamma$ & $\mathbf{3}+\mathbf{3}\gamma$ & $\mathbf{3}+\mathbf{2}\gamma$ & $\mathbf{3}+\gamma$  & $\mathbf{1}$ & $\mathbf{1}+\mathbf{4}\gamma$ & $\mathbf{1}+\mathbf{3}\gamma$ & $\mathbf{1}+\mathbf{2}\gamma$ & $\mathbf{1}+\gamma$ & $\mathbf{4}$ & $\mathbf{4}+\mathbf{4}\gamma$ & $\mathbf{4}+\mathbf{3}\gamma$ & $\mathbf{4}+\mathbf{2}\gamma$ & $\mathbf{4}+\gamma$ & $\mathbf{2}$ & $\mathbf{2}+\mathbf{4}$ & $\mathbf{2}+\mathbf{3}\gamma$ & $\mathbf{2}+\mathbf{2}\gamma$ & $\mathbf{2}+\gamma$ \\
  &$\mathbf{1} +\mathbf{4}\gamma$ & $\mathbf{0}$ & $\mathbf{1}+\mathbf{4}\gamma$ & $\mathbf{2}+\mathbf{3}\gamma$ & $\mathbf{3}+\mathbf{2}\gamma$ & $\mathbf{4}+\gamma$ & $\mathbf{3}+\mathbf{4}\gamma$ & $\mathbf{4}+\mathbf{3}\gamma$ & $\mathbf{2}\gamma$ & $\mathbf{1}+\gamma$ & $\mathbf{2}$ & $\mathbf{1}+\mathbf{3}\gamma$ & $\mathbf{2}+\mathbf{2}\gamma$ & $\mathbf{3}+\gamma$ & $\mathbf{4}$ & $\mathbf{4}\gamma$ & $\mathbf{4}+\mathbf{2}\gamma$ & $\gamma$ & $\mathbf{1}$ & $\mathbf{2}+\mathbf{4}\gamma$ & $\mathbf{3}+\mathbf{3}\gamma$ & $\mathbf{2}+\gamma$ & $\mathbf{3}$ & $\mathbf{4}+\mathbf{4}\gamma$ & $\mathbf{3}\gamma$ & $\mathbf{1}+\mathbf{2}\gamma$  \\
  &$\mathbf{2} +\mathbf{4}\gamma$ & $\mathbf{0}$ & $\mathbf{2}+\mathbf{4}\gamma$ & $\mathbf{4}+\mathbf{3}\gamma$ & $\mathbf{1}+\mathbf{4}\gamma$ & $\mathbf{3}+\gamma$ & $\mathbf{2}+\mathbf{2}\gamma$ & $\mathbf{4}+\gamma$ & $\mathbf{1}$ & $\mathbf{3}+\mathbf{4}\gamma$ & $\mathbf{3}\gamma$ & $\mathbf{4}+\mathbf{4}\gamma$ & $\mathbf{1}+\mathbf{3}\gamma$ & $\mathbf{3}+\mathbf{2}\gamma$ & $\gamma$ & $\mathbf{2}$ & $\mathbf{1}+\gamma$ & $\mathbf{3}$ & $\mathbf{4}\gamma$ & $\mathbf{2}+\mathbf{3}\gamma$ & $\mathbf{4}+\mathbf{2}\gamma$ & $\mathbf{3}+\mathbf{3}\gamma$ & $\mathbf{2}\gamma$ & $\mathbf{2}+\gamma$ & $\mathbf{4}$ & $\mathbf{1}+\mathbf{4}\gamma$  \\
  &$\mathbf{3}+\mathbf{4}\gamma$ & $\mathbf{0}$ & $\mathbf{3}+\mathbf{4}\gamma$ & $\mathbf{1}+\mathbf{3}\gamma$ & $\mathbf{4}+\mathbf{2}$ & $\mathbf{2}+\gamma$ & $\mathbf{2}+\mathbf{3}\gamma$ & $\mathbf{2}\gamma$ & $\mathbf{3}+\gamma$ & $\mathbf{1}$ & $\mathbf{4}+\mathbf{4}\gamma$ & $\mathbf{4}+\gamma$ & $\mathbf{2}$ & $\mathbf{4}\gamma$ & $\mathbf{3}+\mathbf{3}\gamma$ & $\mathbf{1}+\mathbf{2}\gamma$ & $\mathbf{1}+\mathbf{4}\gamma$ & $\mathbf{4}+\mathbf{3}\gamma$ & $\mathbf{2}+\mathbf{2}\gamma$ & $\gamma$ & $\mathbf{3}$ & $\mathbf{3}+\mathbf{2}\gamma$ & $\mathbf{1}+\gamma$ & $\mathbf{4}$ & $\mathbf{2}+\mathbf{4}\gamma$ & $\mathbf{3}\gamma$ \\
  &$\mathbf{4}+\mathbf{4}\gamma$ & $\mathbf{0}$ & $\mathbf{4}+\mathbf{4}\gamma$ & $\mathbf{3}+\mathbf{3}\gamma$ & $\mathbf{2}+\mathbf{2}\gamma$ & $\mathbf{1}+\gamma$ & $\mathbf{3}+\gamma$ & $\mathbf{2}$ & $\mathbf{1}+\mathbf{4}\gamma$ & $\mathbf{3}\gamma$ & $\mathbf{4}+\mathbf{2}\gamma$ & $\mathbf{1}+\mathbf{2}\gamma$ & $\gamma$ & $\mathbf{4}$ & $\mathbf{3}+\mathbf{4}\gamma$ & $\mathbf{2}+\mathbf{3}\gamma$ & $\mathbf{4}+\mathbf{3}\gamma$ & $\mathbf{3}+\mathbf{2}\gamma$ & $\mathbf{2}+\gamma$ & $\mathbf{1}$ & $\mathbf{4}\gamma$ & $\mathbf{2}+\mathbf{4}\gamma$ & $\mathbf{1}+\mathbf{3}\gamma$ & $\mathbf{2}\gamma$ & $\mathbf{4}+\gamma$ & $\mathbf{3}$\\ \bigskip
\end{tabular}}}
\caption{Multiplication table for the near-field $\mathbb{F}_{5^{2}}$ \label{table1}} 
\end{table} 
\end{landscape}

The following corollary arises as a direct consequence of Theorem \ref{thm1}.

\begin{cor}
\label{dimension}
Let $V$ be a regular near-vector space over $F$. Then
\begin{equation*}
    \operatorname{dim}(V) = \operatorname{dim}_{\myfraku{d}}(Q_{u}(V))
\end{equation*}
for all $u \in Q(V)^{*}$.
\end{cor}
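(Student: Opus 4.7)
The plan is to derive this directly from Theorem \ref{thm1}, specifically from the equivalence of regularity with statement (9): any basis $\mathcal{B}$ of $Q_u(V)$ as a vector space over $\myfraku{d}$ is a scalar basis of $V$ as a near-vector space over $F$.

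First, I would fix an arbitrary $u \in Q(V)^{*}$ and choose a basis $\mathcal{B}$ of $Q_u(V)$ viewed as a vector space over the division ring $\myfraku{d}$; such a basis exists by Lemma \ref{Vu} (1) together with the standard existence of bases for vector spaces over division rings. By hypothesis $V$ is regular, so Theorem \ref{thm1} (9) applies and tells us that $\mathcal{B}$ is simultaneously a scalar basis of $V$ over $F$. In particular the cardinalities coincide: $|\mathcal{B}|=\dim_{\myfraku{d}}(Q_u(V))$.

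To conclude I would invoke the fact that the dimension of $V$ as a near-vector space is the cardinality of any scalar basis (which is well-defined; this is the invariance of dimension for near-vector spaces, as recalled via \cite{MarquesMoore} in the preliminaries). Therefore $\dim(V)=|\mathcal{B}|=\dim_{\myfraku{d}}(Q_u(V))$, and since $u \in Q(V)^{*}$ was arbitrary, the equality holds for every such $u$.

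There is no substantial obstacle here: the corollary is essentially a repackaging of implication $(1)\Rightarrow(9)$ of Theorem \ref{thm1}. The only mild subtlety is ensuring that the cardinalities on both sides refer to genuine bases (rather than spanning or independent sets) and that both notions of dimension are well-defined, but these are standard facts already established in the cited references. One could alternatively derive the result from item (8) by summing the dimensions of the direct summands $Q_{\delta_j u}(V)$ and observing, via Corollary \ref{deltabasis}, that each $\delta_j \mathcal{B}$ has the same cardinality as $\mathcal{B}$; however, the route through (9) is the most direct.
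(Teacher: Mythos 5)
Your proposal is correct and matches the paper's intent: the paper gives no written proof, stating only that the corollary is a direct consequence of Theorem \ref{thm1}, and the route you take through item (9) (a basis of $Q_u(V)$ over $\myfraku{d}$ is a scalar basis of $V$, so the two dimensions are cardinalities of the same set) is the natural way to make that explicit.
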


\begin{rem}
\label{rem}
\begin{enumerate}
\item Corollary \ref{dimension} is not true without the assumption that our near-vector space is regular. Consider the scalar group $(\mathbb{R},\cdot)$, which is the underlying multiplicative group of the field $(\mathbb{R}, +, \cdot)$. We define an action by endomorphism of $\mathbb{R}$ on $\mathbb{R}^{3}$ as follows:
\begin{equation*}
    \alpha \smallstar (x, y, z) = (\alpha x, \alpha^{3} y, \alpha^{3} z)
\end{equation*}
for $\alpha \in \mathbb{R}$ and $(x, y, z) \in \mathbb{R}^{3}$. Then $(\mathbb{R}^{3}, +, \smallstar)$ becomes a near-vector space and has a scalar basis $\{(1, 0, 0), (0, 1, 0), (0, 0, 1)\}$. In this case, $Q_{(1, 0, 0)}(\mathbb{R}^{3}, +, \smallstar) = \mathbb{R} \times \{0\} \times \{0\}$, and so $\operatorname{dim}(Q_{(1, 0, 0)}(\mathbb{R}^{3}, +, \smallstar)) = 1$, whereas $\operatorname{dim}(\mathbb{R}^{3}) = 3$. 
\item When $V$ is a near-vector space that is not necessarily regular, we know by \cite[Satz 4.13]{Andre} or \cite[Theorem 2.4-17]{DeBruyn} that any near-vector space can be decomposed into a direct sum of its regular subspaces. Theorem \ref{thm1} provides an additional decomposition into distributive subspaces.
\end{enumerate}
\end{rem}

The dimension of an element in a near-vector space \(V\) is defined as the minimal number of elements in the quasi-kernel required to express the element as a sum of those elements (see \cite[Definition 3.5]{Howellspanning}). The next lemma provides the regular decomposition of the span of an element in \(V\).
\begin{lemm}
\label{spanfamily}
    Let \(v \in V\) and suppose \(\{v_i\}_{i \in I}\) is a family of regular components of \(v\), where \(v_i \in V_i\) for all \(i \in I\), and \(\{V_i\}_{i \in I}\) is a regular decomposition family for \(V\). Then \(\{\operatorname{span}(v_i)\}_{i \in I}\) is the regular decomposition family for \(\operatorname{span}(v)\). In particular, we have
    \[
    \operatorname{dim}(v) = \sum_{i \in I} \operatorname{dim}(v_i) \geq |\{v_i \mid v_i \neq 0 \text{ for all } i \in I\}|.
    \]
\end{lemm}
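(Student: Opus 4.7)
The plan is to first establish the direct sum decomposition $\operatorname{span}(v) = \bigoplus_{i \in I} \operatorname{span}(v_i)$ with each summand a maximal regular subspace of $\operatorname{span}(v)$, and then to deduce the dimension formula by a counting argument. The crucial preliminary step is to verify that the quasi-kernel of $V$ decomposes as a disjoint union
\[
Q(V)^{*} = \bigcupdot_{i \in I} Q(V_i)^{*}.
\]
Indeed, if $u \in Q(V)^{*}$ had two nonzero components $u_i, u_j$ in distinct maximal regular summands, then splitting $\alpha u + \beta u = \gamma u$ into $V_i$- and $V_j$-parts by uniqueness of the direct sum forces $\alpha +_{u_i} \beta = \gamma = \alpha +_{u_j} \beta$ for all $\alpha, \beta$. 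But then $u_i + u_j = u \in Q(V)$ already witnesses that $u_i$ and $u_j$ are compatible, contradicting the maximality of the $V_i$. It follows that any regular subspace $W$ of $V$ is contained in a single $V_i$, since $W$ is additively generated by $Q(W) \subseteq Q(V)$ and no two nonzero elements of $Q(W)^{*}$ can sit in distinct $V_j$ by the same argument.

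Applying this to the maximal regular decomposition $\operatorname{span}(v) = \bigoplus_{j \in J} S_j$ of $\operatorname{span}(v)$ guaranteed by \cite[Theorem 2.4-17]{DeBruyn}, each $S_j$ is contained in some $V_{i(j)}$ with the $i(j)$ pairwise distinct; by maximality $S_j = \operatorname{span}(v) \cap V_{i(j)}$, and $\operatorname{span}(v) \cap V_i = 0$ for $i \notin \{i(j)\}$. I then identify $\operatorname{span}(v) \cap V_i$ with $\operatorname{span}(v_i)$: any $x \in \operatorname{span}(v)$ has the form $x = \sum_k \alpha_k v = \sum_i \sum_k \alpha_k v_i$, and if in addition $x \in V_i$, uniqueness of the decomposition gives $x = \sum_k \alpha_k v_i \in \operatorname{span}(v_i)$; conversely, the $i$-component of $v$ in the $\operatorname{span}(v)$-decomposition is $v_i$ by uniqueness in $V$, so $v_i \in \operatorname{span}(v) \cap V_i$ and hence $\operatorname{span}(v_i) \subseteq \operatorname{span}(v) \cap V_i$.

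For the dimension identity, the inequality $\operatorname{dim}(v) \leq \sum_{i \in I} \operatorname{dim}(v_i)$ follows by concatenating minimal quasi-kernel expressions for each $v_i$ and using $Q(V_i) \subseteq Q(V)$. Conversely, any minimal expression $v = \sum_{k=1}^{\operatorname{dim}(v)} w_k$ with $w_k \in Q(V)^{*}$ can be partitioned via the disjoint-union property into groups indexed by $i$; the sum of the $i$th group lies in $V_i$ and, by uniqueness in $V$, equals $v_i$, so $\operatorname{dim}(v_i) \leq |\{k : w_k \in Q(V_i)\}|$ and summing yields $\sum_i \operatorname{dim}(v_i) \leq \operatorname{dim}(v)$. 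The final inequality $\operatorname{dim}(v) \geq |\{v_i \mid v_i \neq 0\}|$ is immediate from $\operatorname{dim}(v_i) \geq 1$ whenever $v_i \neq 0$. The main obstacle is the preliminary disjoint-union step, which serves as the linchpin both for the span identification (by forcing regular subspaces into a single $V_i$) and for the dimension count (by partitioning any quasi-kernel expression).
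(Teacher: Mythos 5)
Your proof is correct, but it takes a genuinely different route from the paper's. The paper invokes \cite[Theorem 3.2]{MarquesMoore} to produce a quasi-kernel spanning set \(\Theta\) with \(v=\sum_{q\in\Theta}q\) and \(\operatorname{span}(v)=\bigoplus_{q\in\Theta}Fq\), partitions \(\Theta\) by the equivalence ``same addition up to a scalar,'' and shows the resulting partial sums are the regular components; the direct sum decomposition of \(\operatorname{span}(v)\) then falls out by squeezing \(\bigoplus_s\operatorname{span}(v_s)\) between \(\operatorname{span}(v)\) and \(\bigoplus_{q\in\Theta}Fq\). You instead argue structurally: you first prove that \(Q(V)^{*}=\bigcupdot_{i\in I}Q(V_i)^{*}\) (a quasi-kernel element cannot straddle two maximal regular summands, since its components would share an addition and hence be compatible, violating the fact that the \(Q(V_i)^{*}\) are the compatibility classes), deduce that every regular subspace sits inside a single \(V_i\), and then identify the maximal regular summands of \(\operatorname{span}(v)\) with \(\operatorname{span}(v)\cap V_i=\operatorname{span}(v_i)\). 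Both arguments are sound; yours leans on standard structural facts that should be cited rather than re-derived (compatibility is an equivalence relation whose classes generate the \(V_i\), as in Andr\'e and de Bruyn; closure of same-addition quasi-kernel elements under addition, used in Lemma \ref{Vu}; and the description \(\operatorname{span}(v)=\{\sum_k\alpha_k v\}\) from \cite{Howellspanning}, though this last step can be bypassed since \(v_i\in\operatorname{span}(v)\) already follows from your identification of the summands). What your approach buys is twofold: it works directly with the given family \(\{V_i\}_{i\in I}\) rather than constructing a fresh one and appealing to uniqueness of the regular decomposition, and it proves the dimension identity \(\operatorname{dim}(v)=\sum_{i\in I}\operatorname{dim}(v_i)\) explicitly by partitioning a minimal quasi-kernel expression across the summands --- a step the paper leaves implicit, relying on \(\operatorname{dim}(v)=\operatorname{dim}(\operatorname{span}(v))\) and additivity of dimension over direct sums. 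What the paper's approach buys is self-containedness relative to \cite[Theorem 3.2]{MarquesMoore} and an explicit exhibition of the regular components as sub-sums of a single quasi-kernel decomposition of \(v\).
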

\begin{proof}
By \cite[Satz 4.14]{Andre} or \cite[Theorem 2.4-17]{DeBruyn}, we know that \(V\) has a regular decomposition family. Let \(v \in V\). According to \cite[Theorem 3.2]{MarquesMoore}, there exists a set \(\Theta \subseteq Q(V)\) such that \(v = \sum_{q \in \Theta} q\) and
\begin{equation}
\label{equation2}
    \operatorname{span}(v) = \bigoplus_{q \in \Theta} F q.
\end{equation}
Define an equivalence relation \(\sim\) on \(\Theta\) as follows: given \(q_1, q_2 \in \Theta\), \(q_1 \sim q_2\) if and only if there exists some \(s \in F^*\) such that \(+_{q_1} = +_{sq_2}\). If \(\mathcal{S}\) is a set of full representatives of the equivalence classes of elements in \(\Theta\) under the equivalence relation \(\sim\), then
\begin{equation}
\label{equation1}
    \Theta = \bigcupdot_{s \in \mathcal{S}} \Theta_s
\end{equation}
where \(\{\Theta_s\}_{s \in \mathcal{S}}\) are the equivalence classes.
Thus, 
\[v = \sum_{q \in \bigcupdot_{s \in \mathcal{S}} \Theta_s} q = \sum_{s \in \mathcal{S}} \sum_{q \in \Theta_s} q = \sum_{s \in \mathcal{S}} v_s\]
where \(v_s = \sum_{q \in \Theta_s} q\) for all \(s \in \mathcal{S}\). Hence, \(\{v_s\}_{s \in \mathcal{S}}\) is a family of regular components of \(v\). Clearly, \(\operatorname{span}(v) \subseteq \bigoplus_{s \in \mathcal{S}} \operatorname{span}(v_s)\), and \(\operatorname{span}(v_s) \subseteq \bigoplus_{q \in \Theta_s} F q\) for all \(s \in \mathcal{S}\). 
Then, by (\ref{equation2}) and (\ref{equation1}), we have
\begin{align*}
    \bigoplus_{s \in \mathcal{S}} \operatorname{span}(v_s) & \subseteq \bigoplus_{s \in \mathcal{S}} \bigoplus_{q \in \Theta_s} F q = \bigoplus_{q \in \bigcupdot \Theta_s} F q = \bigoplus_{q \in \Theta} F q = \operatorname{span}(v).
\end{align*}
Therefore, \(\operatorname{span}(v) = \bigoplus_{s \in \mathcal{S}} \operatorname{span}(v_s)\), which proves that \(\{\operatorname{span}(v_s)\}_{s \in \mathcal{S}}\) is the regular decomposition family for \(\operatorname{span}(v)\).
\end{proof}

Using the notation of the previous lemma, we see that in order to understand \(\operatorname{span}(v)\), it is sufficient to understand \(\operatorname{span}(v_i)\) for all \(i \in I\). This can be achieved using any of the characterizations of regularity provided in Theorem \ref{thm1}.


\begin{thebibliography}{99}
\bibitem{Andre}  {\sc J. Andr\'e}, {\it Lineare Algebra \"uber Fastk\"orpern}, Math. Z, {\bf 136}: 295--313 (1974).
\bibitem{DeBruyn} {\sc A. de Bruyn}, {\it Near vector spaces}, Stellenbosch University Thesis, (1990).
\bibitem{Howell}
{\sc K.-T. Howell}, {\it Contributions to the Theory of Near-Vector Spaces}, PhD Thesis, University of the Free State, Bloemfontein, (2008).
\bibitem{Howell2} {\sc K.-T. Howell,} {\it On subspaces and mappings of Near-vector spaces}, Communications in Algebra {\bf 43(6)}: 2524--2540 (2015).
\bibitem{HM22}{\sc K.-T. Howell, S. Marques}, {\it Toward an intuitive understanding of the structure of near-vector space}, Commun Algebra, 1-14 (2022).
\bibitem{HowMey} {\sc K.-T. Howell, J.H. Meyer}, {\it Near-vector spaces determined by finite fields}, J. Algebra, {\bf 398}: 55--62 (2010).
\bibitem{HR22} {\sc K.-T.~Howell and J.~Rabie}, {\it Geometries with commutative joins and their applications to near-vector spaces}, Quaestiones Mathematicae, {\bf 46(1)}: 181-205 (2023).
\bibitem{HS18} {\sc K.-T.~Howell and S.~P.~Sanon},
{\it Linear mappings of near-vector spaces},
Quaest. Math. {\bf 41}: 493--514 (2018).
\bibitem{Howellspanning}
{\sc K.-T. Howell and S.~P. Sanon}, {\it On spanning sets and generators of near-vector spaces}, Turkish Journal of Mathematics. {\bf 42}: 3232--3241 (2018).
\bibitem{MarquesMoore} {\sc S.~Marques and D.~Moore}, {\it Near-linear algebra}, Journal of Algebra and Its Applications (accepted for publication) (2025).
\bibitem{pilz}
{\sc G. Pilz}, {\it Near-rings and their links with groups}, North Holland Publishing Co., Amsterdam, Netherlands, 254--258 (1983).
\end{thebibliography}
\end{document}